\numberwithin{equation}{section}
\providecommand{\eprint}[2][]{\href{http://arxiv.org/abs/#2}{arXiv:#2}}
\begin{document}

\newcommand{\QUI}{\bigskip ****** \textbf{arrivato qui!} *******\bigskip}

\newtheorem{theorem}{Theorem}[section]
\newtheorem{thm}[theorem]{Theorem}
\newtheorem{lemma}[theorem]{Lemma}
\newtheorem{proposition}[theorem]{Proposition}
\newtheorem{corollary}[theorem]{Corollary}

\theoremstyle{definition}
\newtheorem{definition}[theorem]{Definition}
\newtheorem{example}[theorem]{Example}

\theoremstyle{remark}
\newtheorem{remark}[theorem]{Remark}

\newenvironment{magarray}[1]
{\renewcommand\arraystretch{#1}}
{\renewcommand\arraystretch{1}}

\newcommand{\quot}[2]{
{\lower-.2ex \hbox{$#1$}}{\kern -0.2ex /}
{\kern -0.5ex \lower.6ex\hbox{$#2$}}}

\newcommand{\mapor}[1]{\smash{\mathop{\longrightarrow}\limits^{#1}}}
\newcommand{\mapin}[1]{\smash{\mathop{\hookrightarrow}\limits^{#1}}}
\newcommand{\mapver}[1]{\Big\downarrow
\rlap{$\vcenter{\hbox{$\scriptstyle#1$}}$}}
\newcommand{\liminv}{\smash{\mathop{\lim}\limits_{\leftarrow}\,}}

\newcommand{\Set}{\mathbf{Set}}
\newcommand{\Art}{\mathbf{Art}}
\newcommand{\solose}{\Rightarrow}

\newcommand{\specif}[2]{\left\{#1\,\left|\, #2\right. \,\right\}}

\renewcommand{\bar}{\overline}
\newcommand{\de}{\partial}
\newcommand{\debar}{{\overline{\partial}}}
\newcommand{\per}{\!\cdot\!}
\newcommand{\Oh}{\mathcal{O}}
\newcommand{\sA}{\mathcal{A}}
\newcommand{\sB}{\mathcal{B}}
\newcommand{\sC}{\mathcal{C}}
\newcommand{\sD}{\mathcal{D}}
\newcommand{\sE}{\mathcal{E}}
\newcommand{\sF}{\mathcal{F}}
\newcommand{\sG}{\mathcal{G}}
\newcommand{\sI}{\mathcal{I}}
\newcommand{\sH}{\mathcal{H}}
\newcommand{\sK}{\mathcal{K}}
\newcommand{\sL}{\mathcal{L}}
\newcommand{\sM}{\mathcal{M}}
\newcommand{\sP}{\mathcal{P}}
\newcommand{\sU}{\mathcal{U}}
\newcommand{\sV}{\mathcal{V}}
\newcommand{\sX}{\mathcal{X}}
\newcommand{\sY}{\mathcal{Y}}
\newcommand{\sN}{\mathcal{N}}
\newcommand{\sT}{\mathcal{T}}
\newcommand{\Aut}{\operatorname{Aut}}
\newcommand{\Id}{\operatorname{Id}}
\newcommand{\Tr}{\operatorname{Tr}}
\newcommand{\Mor}{\operatorname{Mor}}
\newcommand{\Def}{\operatorname{Def}}
\newcommand{\Fitt}{\operatorname{Fitt}}
\newcommand{\Hom}{\operatorname{Hom}}
\newcommand{\Hilb}{\operatorname{Hilb}}
\newcommand{\HOM}{\operatorname{\mathcal H}\!\!om}
\newcommand{\EXT}{\operatorname{\mathcal E}\!\!xt}
\newcommand{\DER}{\operatorname{\mathcal D}\!er}
\newcommand{\Spec}{\operatorname{Spec}}
\newcommand{\Der}{\operatorname{Der}}
\newcommand{\Tor}{{\operatorname{Tor}}}
\newcommand{\Ext}{{\operatorname{Ext}}}
\newcommand{\End}{{\operatorname{End}}}
\newcommand{\END}{\operatorname{\mathcal E}\!\!nd}
\newcommand{\Image}{\operatorname{Im}}
\newcommand{\coker}{\operatorname{coker}}
\newcommand{\tot}{\operatorname{tot}}
\newcommand{\ten}{\bigotimes}
\newcommand{\mA}{\mathfrak{m}_{A}}

\newcommand{\somdir}[2]{\hbox{$\mathrel
{\smash{\mathop{\mathop \bigoplus\limits_{#1}}
\limits^{#2}}}$}}
\newcommand{\tensor}[2]{\hbox{$\mathrel
{\smash{\mathop{\mathop \bigotimes\limits_{#1}}
^{#2}}}$}}
\newcommand{\symm}[2]{\hbox{$\mathrel
{\smash{\mathop{\mathop \bigodot\limits_{#1}}
^{#2}}}$}}
\newcommand{\external}[2]{\hbox{$\mathrel
{\smash{\mathop{\mathop \bigwedge\limits_{#1}}
^{\!#2}}}$}}

\renewcommand{\Hat}[1]{\widehat{#1}}
\newcommand{\dual}{^{\vee}}
\newcommand{\desude}[2]{\dfrac{\de #1}{\de #2}}

\newcommand{\A}{\mathbb{A}}
\newcommand{\N}{\mathbb{N}}
\newcommand{\R}{\mathbb{R}}
\newcommand{\Z}{\mathbb{Z}}
\renewcommand{\H}{\mathbb{H}}
\renewcommand{\L}{\mathbb{L}}
\newcommand{\proj}{\mathbb{P}}
\newcommand{\K}{\mathbb{K}\,}
\newcommand\C{\mathbb{C}}
\newcommand\Del{\operatorname{Del}}
\newcommand\Tot{\operatorname{Tot}}
\newcommand\Grpd{\mbox{\bf Grpd}}

\newcommand{\g}{\mathfrak{g}}

\newcommand\é{\'e}
\newcommand\è{\`e}
\newcommand\à{\`a}
\newcommand\ì{\`i}
\newcommand\ù{\`u}
\newcommand\ò{\`o }


\newcommand{\rh}{\rightarrow}
\newcommand{\contr}{{\mspace{1mu}\lrcorner\mspace{1.5mu}}}

\newcommand{\bi}{\boldsymbol{i}}
\newcommand{\bl}{\boldsymbol{l}}

\newcommand{\MC}{\operatorname{MC}}
\newcommand{\TW}{\operatorname{TW}}

\newenvironment{acknowledgement}{\par\addvspace{17pt}\small\rm
\trivlist\item[\hskip\labelsep{\it Acknowledgement.}]}
{\endtrivlist\addvspace{6pt}}


\title[Endomorphisms of Koszul complexes]{Endomorphisms of Koszul complexes: formality and application to deformation theory}
\date{\today}

\author{Francesca Carocci}
\address{\newline
University of Edinburgh,\hfill\newline
School of Mathematics \lq\lq James Clerk Maxwell building\rq\rq,\hfill\newline
    The King's Buildings,
    EH9 3FD, United Kingdom.}
\email{francesca.carocci@ed.ac.uk}

\author{Marco Manetti}
\address{\newline
Universit\`a degli studi di Roma ``La Sapienza'',\hfill\newline
Dipartimento di Matematica \lq\lq Guido
Castelnuovo\rq\rq,\hfill\newline
P.le Aldo Moro 5,
I-00185 Roma, Italy.}
\email{manetti@mat.uniroma1.it}
\urladdr{\href{http://www.mat.uniroma1.it/people/manetti/}{www.mat.uniroma1.it/people/manetti/}}

\begin{abstract} We study the differential graded Lie algebra of endomorphisms of the Koszul resolution 
of a regular sequence on a unitary commutative $\K$-algebra $R$ and we prove that it is homotopy abelian over $\K$ but not over $R$ (except trivial cases). We apply this result to prove an annihilation theorem 
for obstructions of (derived) deformations of locally complete intersection ideal sheaves on projective schemes.
\end{abstract}

\subjclass[2010]{17B70,14D15, 18G50, 13D10}
\keywords{Koszul complex, Deformations of  coherent sheaves, differential graded Lie algebras}

\maketitle

\section*{Introduction}
Let $R$ be a fixed unitary commutative ring, then the usual notion of differential graded (DG) Lie algebra over a field  extends naturally to the  notion of  DG-Lie algebra over $R$ (Definition~\ref{def.dgla}). Recently DG-Lie algebras over commutative rings have received some attention  in the study of degenerations of Batalin-Vilkoviski algebras \cite{KKP} and also in the study of formality in families \cite{kaledin,lunts,dglaformality}. Also the notions of quasi-isomorphism, formality and homotopy abelianity (i.e., formality plus  the bracket trivial in cohomology) extend without difficulty to DG-Lie algebras over $R$, but the analysis of examples shows immediately that these concepts become very restrictive when working over a ring, and should be replaced with more convenient notions. 

In this paper we restrict our attention to homotopy abelianity and we introduce the notion of \emph{numerically homotopy abelian} (NHA)  DG-Lie algebra (Definition~\ref{def.nhodgla}) which seems more useful, at least for the application in deformation theory. 
Very briefly, the class of NHA DG-Lie algebras is the smallest class satisfying the following conditions:
\begin{enumerate} 
\item if $L$ is abelian, then   $L$ is NHA;
\item if $L\to M$ is a morphism surjective in cohomology and $L$ is NHA, then also  $M$ is NHA;
\item if $L\to M$ is a morphism injective in cohomology and $M$ is NHA, then also  $L$ is NHA.
\end{enumerate}

It is plain that every homotopy  abelian  DG-Lie algebra is also numerically homotopy abelian, and  every numerically homotopy abelian algebra has  trivial bracket in cohomology. It is well known that over a field of characteristic 0  every 
numerically homotopy abelian DG-Lie algebra is also homotopy abelian and that there exist DG-Lie algebras with trivial bracket 
in cohomology that are not homotopy abelian.

The first goal of this paper is to provide some examples, which occur in deformation theory of locally complete intersections, 
of numerically homotopy abelian DG-Lie algebras that are not homotopy abelian: more precisely we 
shall prove that if $f_1,\ldots,f_r\in R$ is a regular sequence contained in a proper ideal and $K^*=K(f_1,\ldots,f_r)$ is its Koszul complex,  then the DG-Lie algebras $\Hom_R^*(K^*,K^*)$ 
of $R$-linear endomorphisms of $K^*$ is  not homotopy abelian over $R$,

On the other side,  we shall prove that the DG-Lie algebra $L$ of graded endomorphisms for the Koszul complex $K^*$ (or its truncated $K^{<0}$) for a regular sequence in $R$ is numerically homotopy abelian (Theorems~\ref{thm.nhakoszul} and \ref{thm.nhakoszultronco}). This implies that if $R$ is a $\K$-algebra, with $\K$ field of characteristic 0, then $L$ is homotopy abelian over $\K$ and therefore the associated 
Maurer-Cartan/gauge functor is unobstructed.

Clearly this fact has a number of consequences in deformation theory of locally complete intersections. For instance   
in Section~\ref{sec.deformations} we shall prove the following  result:

\begin{theorem}
Let $\sI\subset \Oh_X$ be a locally complete intersection ideal sheaf on a projective scheme $X$ over a field $\K$ of characteristic 0, and denote by $\Oh_Z=\Oh_X/\sI$ the structure sheaf on the closed subscheme $Z$ defined by $\sI$.
Let $\sF$ be either a line bundle over $Z$ or $\sF=\sI\otimes \sL$ for a line bundle $\sL$ on $X$.
Then:\begin{enumerate}

\item  the obstructions to deforming $\sF$ are contained in the kernel of the natural map
\[ \alpha_0\colon \Ext^2_{X}(\sF,\sF)\to H^0(X,\EXT^2_{X}(\sF,\sF))\,.\]   
In particular, if $\alpha_0$ is injective then $\sF$ has unobstructed deformations;

\item the obstructions to derived deformations of $\sF$ are contained in the kernel of the natural map of graded vector spaces 
\[ \alpha\colon \bigoplus_n\Ext^{n+2}_{X}(\sF,\sF)\to \bigoplus_n H^0(X,\EXT^{n+2}_{X}(\sF,\sF))\,.\]   
In particular, if $\alpha$ is injective then the DG-Lie algebra $\operatorname{Tot}(L(\sU,\sE^{\bullet}))$ controlling deformations of $\sF$ is homotopy abelian over $\K$.
\end{enumerate}
\end{theorem}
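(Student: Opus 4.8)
\emph{Proof sketch.} The plan is to deduce both assertions from the homotopy abelianity over $\K$ of the local endomorphism DG-Lie algebras of Koszul complexes (Theorems~\ref{thm.nhakoszul} and \ref{thm.nhakoszultronco}), transported to the global deformation DG-Lie algebra by means of the restriction morphism to the $0$-cochains of a suitable affine cover, together with functoriality of obstructions.

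First I would set up the geometry. Since $\sI$ is locally a complete intersection one can fix a finite affine open cover $\sU=\{U_i\}$ of $X$ so fine that on each $U_i$ the ideal $\sI(U_i)$ is generated by a regular sequence and the relevant twisting line bundle ($\sL$, resp.\ the given line bundle on $Z$) is trivial. Then $\sF|_{U_i}$ is, non-canonically, isomorphic to $\Oh_X(U_i)/\sI(U_i)$ in the first case of the theorem and to $\sI(U_i)$ in the second, hence it is resolved over $\Oh_X(U_i)$ by the Koszul complex $K^*$ in the first case and by the truncated Koszul complex $K^{<0}$ in the second. Picking such resolutions $\sE^\bullet|_{U_i}$, the semicosimplicial DG-Lie algebra $L(\sU,\sE^\bullet)$ has for $0$-cochains object the product $P:=\prod_i\Hom^*_{\Oh_X(U_i)}(\sE^\bullet|_{U_i},\sE^\bullet|_{U_i})$, its Thom--Whitney totalization $G:=\Tot(L(\sU,\sE^\bullet))$ controls the (derived) deformations of $\sF$, and the spectral sequence of the \v{C}ech filtration on $G$ is the local-to-global Ext spectral sequence: one has $E_1^{p,q}=\prod_{|I|=p+1}H^q\bigl(\Hom^*_{\Oh_X(U_I)}(\sE^\bullet|_{U_I},\sE^\bullet|_{U_I})\bigr)=\prod_{|I|=p+1}\Gamma(U_I,\EXT^q_X(\sF,\sF))$ because $\sE^\bullet|_{U_I}$ is a locally free resolution of $\sF|_{U_I}$ and $U_I$ is affine, hence $E_2^{p,q}=H^p(X,\EXT^q_X(\sF,\sF))$, converging to $H^{p+q}(G)=\Ext^{p+q}_X(\sF,\sF)$.

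The crucial input is that every vertex DG-Lie algebra $\Hom^*_{\Oh_X(U_I)}(\sE^\bullet|_{U_I},\sE^\bullet|_{U_I})$ is isomorphic, as a DG-Lie algebra, to the graded-endomorphism DG-Lie algebra of a Koszul complex $K^*$ or $K^{<0}$ over $\Oh_X(U_I)$ --- twisting a complex by a line bundle does not change its endomorphism DG-Lie algebra --- so by Theorems~\ref{thm.nhakoszul} and \ref{thm.nhakoszultronco} it is numerically homotopy abelian over $\Oh_X(U_I)$, a fortiori numerically homotopy abelian over $\K$, and hence, $\K$ having characteristic $0$, homotopy abelian over $\K$. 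Finite products preserve homotopy abelianity, so $P$ is homotopy abelian over $\K$ and its deformation functor is unobstructed. Now $G$ carries a natural morphism of DG-Lie algebras $\pi\colon G\to P$, namely the restriction of $\Tot(L(\sU,\sE^\bullet))$ to the $0$-simplices of $\sU$ (the projection onto the degree-$0$ part of the \v{C}ech/Thom--Whitney complex), and on cohomology $\pi$ in degree $q$ is the composite
\[
\Ext^q_X(\sF,\sF)\;\longrightarrow\;H^0(X,\EXT^q_X(\sF,\sF))\;\hookrightarrow\;\prod_i\Gamma(U_i,\EXT^q_X(\sF,\sF)).
\]
Here the first arrow is the edge (sheafification) map, i.e.\ $\alpha$ in degree $q$, and the second is injective by the sheaf axiom; consequently $\ker H^q(\pi)=\ker(\alpha\text{ in degree }q)$, and for $q=2$ this kernel is $\ker\alpha_0$.

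It remains to invoke functoriality of the obstruction map. A deformation of $\sF$ over a (DG-)Artinian base $A$ is a Maurer--Cartan element of $G\otimes\mA$; its image under $\pi$ is a deformation over $A$ of the local model whose obstruction class equals $H^*(\pi)$ applied to the obstruction class of the original deformation. Since $P$ is homotopy abelian over $\K$ these image obstructions vanish, so every obstruction class of $\sF$ lies in $\ker H^2(\pi)=\ker\alpha_0$, which is statement~(1), and, running the argument in each cohomological degree, in $\ker\alpha$, which is the first half of statement~(2). Finally, if $\alpha_0$ (resp.\ $\alpha$) is injective then $\pi$ is injective in cohomology in degree $2$ (resp.\ in all degrees, the low-degree terms being treated by the same identification), so by the third defining property of numerically homotopy abelian DG-Lie algebras $G$ is numerically homotopy abelian over $\K$, hence homotopy abelian over $\K$: for (1) this yields unobstructedness, for (2) it is the assertion itself. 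I expect the main obstacle to be the precise comparison of $\pi$ with the edge morphism of the local-to-global Ext spectral sequence --- that is, checking that the \v{C}ech filtration on the Thom--Whitney totalization induces on cohomology exactly the local-to-global filtration, so that $H^2(\pi)$ is indeed the sheafification map $\alpha_0$; once this is in place, functoriality of obstructions, stability of homotopy abelianity under finite products, and the equivalence of numerical homotopy abelianity with homotopy abelianity over a characteristic-$0$ field are all routine, the only extra bookkeeping being the twist/truncation distinguishing the two cases of $\sF$.
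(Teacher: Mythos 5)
Your proposal is correct and follows essentially the same route as the paper, which deduces the statement by combining the local homotopy abelianity of Koszul endomorphism algebras (Theorems~\ref{thm.nhakoszul} and~\ref{thm.nhakoszultronco}) with the projection $\operatorname{Tot}(L(\sU,\sE^{\bullet}))\to L(\sU,\sE^{\bullet})_0$, the spectral-sequence identification of Remark~\ref{rem.spectralsequence}, and functoriality of (derived) obstructions. The one imprecision is your claim that each vertex algebra $\Hom^*_{\Oh_X(U_I)}(\sE^{\bullet}|_{U_I},\sE^{\bullet}|_{U_I})$ is \emph{isomorphic} to the endomorphism algebra of a (possibly truncated, twisted) Koszul complex: a single global locally free resolution on a projective scheme will not restrict to Koszul complexes on the $U_i$, so one only gets a quasi-isomorphism via the independence-of-resolution statement in Theorem~\ref{thm.descentesheaf} (as in the paper's Lemma preceding the statement); since homotopy abelianity over $\K$ is a quasi-isomorphism invariant, this does not affect the rest of your argument.
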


The underlying idea is that, since the DG-Lie algebra of endomorphisms of a Koszul resolution is a homotopy abelian over $\K$, 
the restriction to an affine open set of a sheaf $\sF$ as above has unobstructed (derived) deformations; it is then sufficient to 
consider the morphism from global to local deformations of $\sF$. 
To ensure that the passage from global to local is a ``genuine'' morphism of deformation theories, we shall see that 
the maps $\alpha_0,\alpha$ are induced in cohomology by a morphism of DG-Lie algebras, namely from the DG-Lie algebra controlling global deformations to the one controlling local deformation. This is done by fixing  a locally free resolution $\sE^{\bullet}$ of $\sF$ (hence the assumption $X$ projective) to then exploit a recent result by Fiorenza-Iacono-Martinengo \cite{FIM}
proving that local and global deformations of $\sF$ are controlled by the quasi-coherent sheaf of DG-Lie algebras $\sH om^*_X(\sE^{\bullet},\sE^{\bullet})$.
 
\begin{remark} After the first version of this paper was posted on arXiv,  F. Meazzini proposed  a new approach to deformations of coherent sheaves via local resolution \cite{meazzini} that does not require 
the existence of locally free resolutions and that allows to replace in the above theorem the assumption $X$ projective with $X$ separated and noetherian scheme of finite dimension: however this weakening of assumption is technically quite complicated and for simplicity of exposition we maintain here the assumption $X$ projective.  
\end{remark}

\bigskip
\section{DG-Lie algebras over commutative rings}

In what follows $R$ will be a unitary commutative ring. 

\begin{definition}\label{def.dgla} 
A \textbf{differential graded Lie algebra} (DG-Lie algebra) over $R$ is the data of cochain complex of $R$-modules  
$(L,d)$ equipped with an $R$-bilinear bracket
$[-,-]\colon L\times L\to L$  satisfying 
the following conditions:\begin{enumerate}
\item $[-,-]$ is homogeneous graded skewsymmetric.  
This means that:\begin{enumerate}
\item  $[L^i,L^j]\subset L^{i+j}$,

\item  $[a,b]+(-1)^{\bar{a}\bar{b}}[b,a]=0$ for every $a,b$ homogeneous,

\item $[a,a]=0$ for every homogeneous $a$ of even degree;
\end{enumerate} 

\item (Leibniz identity) 
$d[a,b]=[da,b]+(-1)^{\bar{a}}[a,db]$; 

\item (Jacobi identity) every triple of homogeneous elements 
$a,b,c$  satisfies the equality  
\[ [a,[b,c]]=[[a,b],c]+(-1)^{\bar{a}\;\bar{b}}[b,[a,c]]\,;\]

\item  (Bianchi identity) $[b,[b,b]]=0$ for  every homogeneous $b$ of odd degree.

\end{enumerate}
Morphisms of DG-Lie algebras are morphisms of cochain complexes of $R$-modules commuting with brackets. The resulting category is denoted by $\mathbf{DGLA}(R)$.
\end{definition}

Notice that the Jacobi identity implies the Bianchi identity whenever $3$ is not a zero-divisor  in $R$. A quasi-isomorphism of DG-Lie algebras is a morphism of DG-Lie algebras which is also a quasi-isomorphism of complexes. A DG-Lie algebra with trivial bracket is called abelian. 

\begin{definition}\label{def.hadgla} A DG-Lie algebra $L$ is said to be \emph{homotopy abelian} if there exists a finite zigzag of morphisms of DG-Lie algebras 
\[ \xymatrix{&L_1\ar[dl]\ar[dr]&&L_3\ar[dl]\ar[dr]&&&L_n\ar[dl]\ar[dr]&\\
L&&L_2&&{}\ar@{}[r]_{\cdots\qquad\cdots}^{\cdots}&{}&&M}\]
such that  every arrow is a quasi-isomorphism and $M$ is  abelian.
\end{definition}

 As we said in the introduction, over a general ring the notion of homotopy abelianity seems to be too restrictive and it is convenient to consider a suitable more general class.

\begin{definition}\label{def.nhodgla} A DG-Lie algebra $L$ is said to be \emph{numerically homotopy abelian} if there exists a finite zigzag of morphisms of DG-Lie algebras 
\[ \xymatrix{&L_1\ar[dl]_{f_1}\ar[dr]^{g_1}&&L_3\ar[dl]_{f_3}\ar[dr]^{g_3}&&&L_n\ar[dl]_{f_n}\ar[dr]^{g_n}&\\
L&&L_2&&{}\ar@{}[r]_{\cdots\qquad\cdots}^{\cdots}&{}&&M}\]
such that: 
\begin{itemize} 
\item the DG-Lie algebra $M$ is abelian,

\item every morphism $f_i$ is surjective in cohomology,

\item every morphism $g_i$ is injective in cohomology.

\end{itemize}

\end{definition}

It is plain that every homotopy abelian DG-Lie algebra is also numerically homotopy abelian, and that for every numerically homotopy abelian DG-Lie algebra the bracket is trivial in cohomology. 

\begin{remark}\label{rem.nha}
It is well known (see e.g. \cite[Lemma 6.1]{FMpoisson}, \cite[Lemma 2.11]{iacono2017} , \cite[Prop. 4.11]{KKP}) that if $R$ is a field of characteristic 0, then every numerically homotopy abelian DG-Lie algebra is homotopy abelian. 
For instance if $M$ is an abelian Lie algebra and $f\colon M\to L$ is a morphism of DG-Lie algebras that is surjective in cohomology, when $R$ is a field it is  sufficient to consider the restriction of $f$ to a suitable subcomplex $N\subset M$ such that $f\colon N\to L$ is a quasi-isomorphism. The proof of the general case follows the same ideas but requires the factorisation lemma and characteristic $0$.  
\end{remark}

One of the goals of this paper is to show that over general commutative rings the class of numerically homotopy abelian DG-Lie algebras strictly contains 
the class of homotopy abelian DG-Lie algebra. More precisely,  we shall prove that the algebra of endomorphisms of the Koszul complex of a (non trivial) regular sequence is  numerically homotopy abelian but not homotopy abelian.
A simple obstruction to  homotopy abelianity is given by the following result, in which the symbol $\otimes_R^L$ denotes  the usual derived tensor product of complexes of $R$-modules.

\begin{lemma}\label{lem.criterioabelianita} 
Let $H$ be a homotopy abelian DG-Lie algebra over a ring $R$. Then the class of the bracket is trivial in the group
$\Ext^0_R(H\otimes^L_RH,H)$.
\end{lemma}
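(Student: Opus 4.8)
The plan is to exploit the functoriality of the derived tensor product and of $\Ext^0_R(-\otimes^L_R -, -)$ along the zigzag of quasi-isomorphisms witnessing homotopy abelianity of $H$. First I would observe that for a DG-Lie algebra $L$ the bracket $[-,-]\colon L\otimes_R L\to L$ factors through the (graded) quotient $L\otimes_R L\to \Lambda^2_R L$ (or at least is compatible with the skew-symmetrisation), and that composing with the natural map $L\otimes^L_R L\to L\otimes_R L$ gives a well-defined class $\beta_L\in \Ext^0_R(L\otimes^L_R L, L)$. The key point is that this class is natural with respect to morphisms of DG-Lie algebras: if $\phi\colon L\to L'$ is such a morphism, then under the induced maps on $\Ext^0_R(-\otimes^L_R-,-)$ the classes $\beta_L$ and $\beta_{L'}$ correspond. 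More precisely, a quasi-isomorphism $\phi$ induces an isomorphism $\phi\otimes^L_R\phi$ on derived tensor products and an isomorphism on $H$, hence an isomorphism of the relevant $\Ext^0$ groups, and it carries $\beta_L$ to $\beta_{L'}$.

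Next I would run this through the zigzag. By Definition~\ref{def.hadgla} there is a chain $H\leftarrow L_1\rightarrow L_2\leftarrow\cdots\rightarrow M$ of quasi-isomorphisms of DG-Lie algebras with $M$ abelian. At each step the naturality just described identifies the bracket class on one term with the bracket class on the adjacent term, via the isomorphism of $\Ext^0_R$ groups induced by that quasi-isomorphism; chaining these isomorphisms together we get a single isomorphism
\[
\Ext^0_R(H\otimes^L_R H, H)\;\xrightarrow{\ \sim\ }\;\Ext^0_R(M\otimes^L_R M, M)
\]
sending the class of the bracket of $H$ to the class of the bracket of $M$. Since $M$ is abelian its bracket is the zero map, so the corresponding class vanishes, and therefore the class of the bracket of $H$ is zero as well.

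The step I expect to be the main obstacle is setting up the naturality cleanly, i.e. checking that the assignment $L\mapsto (L\otimes^L_R L, \text{bracket class})$ is genuinely functorial on $\mathbf{DGLA}(R)$ with values in the appropriate category, so that quasi-isomorphisms act by isomorphisms on these $\Ext^0$ groups. The subtlety is that $L\otimes^L_R L$ is only defined up to quasi-isomorphism (one must pick flat resolutions), so one has to argue with a functorial model — e.g. the bar resolution, or a functorial flat replacement $P(L)\xrightarrow{\sim}L$ — and verify that a quasi-isomorphism $L\xrightarrow{\sim}L'$ induces a quasi-isomorphism $P(L)\otimes_R P(L)\xrightarrow{\sim}P(L')\otimes_R P(L')$ (this is where flatness is used) together with a compatible quasi-isomorphism on the target; then $\Ext^0_R$, being the degree-zero hyper-Ext, turns all of these into isomorphisms. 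Once this bookkeeping is in place the rest is formal. I would also remark that this lemma gives an obstruction and not merely a necessary condition phrased abstractly: it is precisely the nonvanishing of this $\Ext^0$-class that will later be used to show $\Hom^*_R(K^*,K^*)$ is not homotopy abelian over $R$.
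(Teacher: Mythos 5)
Your proposal is correct and follows essentially the same route as the paper: the class of the bracket in $\Ext^0_R(H\otimes^L_R H,H)$ is shown to be invariant (in the sense of vanishing) under quasi-isomorphisms of DG-Lie algebras by passing to cofibrant/flat resolutions $P\to H$, $Q\to K$ and comparing through the zigzag $\Hom^*_R(P\otimes_R P,H)\to \Hom^*_R(P\otimes_R P,K)\leftarrow \Hom^*_R(Q\otimes_R Q,K)$, and then one propagates along the zigzag of Definition~\ref{def.hadgla} to the abelian end where the bracket is literally zero. The bookkeeping you flag as the main obstacle is exactly what the paper handles via cofibrant resolutions in the projective model structure, so no gap remains.
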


\begin{proof} The result is clearly  true whenever the bracket of $H$ is trivial. 
If $H\to K$ is a quasi-isomorphism of DG-Lie algebras, the commutative diagram 
\[ \xymatrix{H\otimes_R H\ar[r]^-{[-,-]}\ar[d]&H\ar[d]\\
K\otimes_R K\ar[r]^-{[-,-]}&K}\]
gives a commutative diagram in the homotopy category of complexes of $R$-modules
\[ \xymatrix{H\otimes_R^L H\ar[r]^-{[-,-]}\ar[d]&H\ar[d]\\
K\otimes_R^L K\ar[r]^-{[-,-]}&K}\]
with the vertical arrows isomomorphisms. Thus the upper horizontal arrow is homotopically 
trivial if and only if the lower  horizontal arrow is.

For later use it is useful to  give also a longer but more constructive proof of the lemma: 
every quasi-isomorphism of DG-Lie algebras $H\to K$ lifts to a morphism of complexes 
between cofibrant resolutions 
\[ \xymatrix{P\ar[r]\ar[d]^{\eta}&H\ar[d]\\
Q\ar[r]&K}\]
where cofibrant is intended with respect to the projective model structure \cite[Thm. 2.3.11]{hovey}. 
This gives a commutative diagram of complexes  
\[ \xymatrix{P\otimes_R P\ar[r]^-{[-,-]}\ar[d]_{\eta\otimes \eta}&H\ar[d]\\
Q\otimes_R Q\ar[r]^-{[-,-]}&K}\]
with the vertical arrows quasi-isomorphisms. Since $P\otimes_R P$ and $Q\otimes_R Q$ are cofibrant complexes the above diagram induces a pair of quasi-isomorphisms 
\[ \Hom^*_R(P\otimes_R P,H)\xrightarrow{\quad} \Hom^*_R(P\otimes_R P,K)
\xleftarrow{\quad}\Hom_R^*(Q\otimes_R Q,K)\]
and then the class of the bracket of $H$ is trivial in 
$ \Ext^0_R(H\otimes^L_RH,H)=H^0(\Hom_R^*(P\otimes_R P,H))$ if and only if the same holds for $K$.
\end{proof}

The following example remarks that we really need to consider the class of the bracket in the derived 
category. Suppose $L$ is homotopy abelian over $R$, but not cofibrant \emph{as a complex}, i.e. with respect to the projective module structure on complexes of $R$-modules \cite{hovey}. In the latter case we can have that $[-,-]\in H^0(\Hom^*_R(L\otimes_R L, L))$ is not homotopic to zero.

\begin{example} 

Given an integral domain $R$ and a non invertible element $t\in R,$  denote 
$A=R/(t)$ and consider the DG-Lie algebra over $R$:
\[ L\colon\quad Rx\xrightarrow{\;d\;}Ry \xrightarrow{\;d\;}Az,\qquad \deg(x)=0,\quad \deg(y)=1,\quad\deg(z)=2,\]
\[ dx=ty,\quad dy=z,\quad [x,y]=y,\quad [x,z]=z,\quad [y,y]=0\;.\]
The verification of Leibniz, Bianchi and Jacobi is completely straightforward.  
Although it is acyclic, and then homotopy abelian, the bracket 
\[[-,-]\colon L\otimes_R L\to L\]
is not homotopic to 0 and then its class is not trivial in $H^0(\Hom^*_R(L\otimes_R L, L))$.
In fact, if  the bracket is the coboundary of a $R$-linear map 
$q\colon L\otimes_R L\to L$ of degree $-1$, 
then $q(x,z)=0$ since 
$tq(x,z)=q(x,tz)=0$, and the relation
\[ y=[x,y]=dq(x,y)+q(ty,y)+q(x,z)\in tRy\]
gives a contradiction.
\end{example}

\begin{corollary}\label{cor.nonabelianityresolution} 
Let $R$ be a commutative unitary ring and let 
\[F^*\colon\qquad F^{-n}\xrightarrow{\,\debar\,}\cdots\xrightarrow{\,\debar\,}F^{-1}
\xrightarrow{\,\debar\,}F^0\]
be a complex  of length $n>0$ of finitely generated free $R$-modules. Assume that there exists a proper ideal $I\subset R$ such that $\debar(F^i)\subset IF^{i+1}$ for every $i$. Then the DG-Lie algebra $H=\Hom^*_R(F^*,F^*)$ is not homotopy abelian over $R$.
\end{corollary}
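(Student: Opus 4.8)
The plan is to assume, for contradiction, that $H=\Hom^*_R(F^*,F^*)$ is homotopy abelian and to apply Lemma~\ref{lem.criterioabelianita} after reducing modulo $I$. First I would record that, since $F^*$ is bounded and each $F^i$ is finitely generated free, every component $H^k=\bigoplus_i\Hom_R(F^i,F^{i+k})$ is a finitely generated free $R$-module, so $H$ is a \emph{bounded} complex of finitely generated free modules; hence $H$ and $H\otimes_R H$ are cofibrant in the projective model structure on complexes of $R$-modules \cite{hovey}, so that $H\otimes_R^L H=H\otimes_R H$ and $\Ext^0_R(H\otimes_R^L H,H)=H^0(\Hom^*_R(H\otimes_R H,H))$. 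Thus by Lemma~\ref{lem.criterioabelianita} the bracket $[-,-]\colon H\otimes_R H\to H$ would be null-homotopic: there would exist an $R$-linear map $q\colon H\otimes_R H\to H$ of degree $-1$ with $[a,b]=d_H q(a,b)+q(d_H a,b)+(-1)^{\bar a}q(a,d_H b)$ for all homogeneous $a,b$.

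Next I would reduce everything modulo $I$. Set $\bar R=R/I$, which is nonzero because $I$ is proper, and $\bar F^*=F^*\otimes_R\bar R$. The hypothesis $\debar(F^i)\subset IF^{i+1}$ says exactly that $\bar F^*$ carries the zero differential. Since the $F^i$ are finitely generated free, applying $-\otimes_R\bar R$ identifies $H\otimes_R\bar R$ with $\bar H:=\Hom^*_{\bar R}(\bar F^*,\bar F^*)$ as a DG-Lie algebra over $\bar R$, compatibly with composition and brackets; and the differential of $\bar H$, being the commutator with the vanishing differential of $\bar F^*$, is zero, hence so is the differential of $\bar H\otimes_{\bar R}\bar H$. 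Tensoring the null-homotopy relation above with $\bar R$ then gives $\overline{[-,-]}=d_{\bar H}\bar q+\bar q\,d_{\bar H\otimes\bar H}=0$, i.e.\ the graded commutator bracket on $\bar H$ must vanish identically.

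Finally I would contradict this by exhibiting a nonzero commutator in $\bar H$. Since $n>0$, both $\bar F^{0}$ and $\bar F^{-n}$ are nonzero free $\bar R$-modules; fixing $\bar R$-basis elements, let $a\colon\bar F^{0}\to\bar F^{-n}$ and $b\colon\bar F^{-n}\to\bar F^{0}$ be the maps sending a chosen basis vector to a chosen basis vector and annihilating the complementary summands, so that $b\circ a$ is a nonzero projector of $\bar F^{0}$ (nonzero precisely because $1\neq 0$ in $\bar R$). Here $a$ has degree $-n$ and $b$ degree $n$, so $[a,b]=a\circ b-(-1)^{n}b\circ a$ in $\bar H^{0}=\bigoplus_i\End_{\bar R}(\bar F^i)$; the two composites lie in the distinct direct summands $\End_{\bar R}(\bar F^{-n})$ and $\End_{\bar R}(\bar F^{0})$, and $b\circ a\neq 0$, so $[a,b]\neq 0$. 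This contradicts the conclusion of the previous paragraph, so $H$ is not homotopy abelian over $R$.

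The only genuinely delicate point is the first paragraph: one must know that the class of the bracket in $H^0(\Hom^*_R(H\otimes_R H,H))$ actually detects homotopy abelianity, which is why the cofibrancy of $H$ \emph{as a complex} is essential — this is exactly the phenomenon that fails in the Example above, where the acyclic (hence homotopy abelian) $L$ has non-trivial bracket class in $H^0(\Hom^*_R(L\otimes_R L,L))$ precisely because $L$ is not cofibrant. Once this is in place, the reduction modulo $I$ and the explicit commutator computation are routine.
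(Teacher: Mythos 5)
Your proof is correct and follows essentially the same route as the paper's: both rest on Lemma~\ref{lem.criterioabelianita} together with the observation that $d(H^i)\subset IH^{i+1}$, so that the cofibrancy of $H$ plus any null-homotopy of the bracket would force the bracket to vanish modulo $I$. The paper phrases this element-wise (taking $\alpha=\operatorname{id}_{F^0}$ and a $\beta\colon F^{-n}\to F^0$ with $\beta(F^{-n})\not\subset IF^0$, and deriving the contradiction $\beta=[\alpha,\beta]\in IH^n$), whereas you reduce the whole homotopy identity modulo $I$ first and then exhibit a nonzero commutator in $\Hom^*_{R/I}(\bar F^*,\bar F^*)$; the two contradictions are the same computation in different packaging.
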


\begin{proof} By the assumption on the length of the complex we have $F^0,F^{-n}\not=0$ and we can choose a morphism of $R$-modules $\beta\colon F^{-n}\to F^0$ such that 
$\beta(F^{-n})\not\subset IF^0$.   
Since  $H=\Hom^*_R(F^*,F^*)$  is a bounded complex of free $R$-modules, we have 
\[ \Ext^0_R(H\otimes^L_RH,H)=H^0(\Hom_R^*(H\otimes_RH,H))\,.\]
Moreover, 
$d(H^i)\subset IH^{i+1}$ for every index $i$: in fact for every 
$f\in H^i$ and every generator $e\in F^{j}$ we have 
\[(df)e=\debar(fe)-(-1)^if(\debar e)\in IF^{i+j+1}\,.\]

Assume that $H$ is homotopy abelian over $R$, then 
by Lemma~\ref{lem.criterioabelianita} there exists an 
$R$-bilinear map $h\colon H\times H\to H$ of degree $-1$ such that 
\[ [x,y]=dh(x,y)+h(dx,y)+ (-1)^{\deg(x)} h(x,dy),\qquad x,y\in H\,.\]
Consider now the following two elements  $\alpha,\beta\in H$:
\begin{enumerate}
\item  $\alpha\in H^0=\Hom^0_R(F^*,F^*)$ is defined as the identity on 
$F^0$ and $\alpha(F^i)=0$ for $i<0$;

\item  $\beta\in H^n=\Hom^n_R(F^*,F^*)$ extends the above   
$\beta\colon F^{-n}\to F^0$ in the unique possible way, i.e., $\beta(F^i)=0$ for $i>-n$.
\end{enumerate}
Then 
\[[\alpha,\beta]=\alpha \beta-\beta\alpha=\beta,\qquad d\beta=0,\qquad d\alpha\in IH^1\,.\]
By the $R$-bilinearity of $h$ we have 
\[ \beta=[\alpha,\beta]=dh(\alpha,\beta)+h(d\alpha,\beta)+h(\alpha,d\beta)\in IH^n\]
and therefore $\beta(F^{-n})$ is contained in the submodule $IF^0$, which is a contradiction with the choice of $\beta$.
\end{proof}

\begin{corollary}\label{cor.nonabelianitykoszul} 
Let $R$ be a commutative unitary ring and let $K^*$ be the Koszul complex of a sequence $f_1,\ldots,f_n\in R$, $n>0$:
\[ K^*\colon \qquad 0\to K^{-n}\to\cdots\to K^{-1}\to K^0,\qquad K^{-i}={\bigwedge}^iR^n\;.\]
If the ideal $(f_1,\ldots,f_n)\subset R$ is proper, then the DG-Lie algebra $H=\Hom^*_R(K^*,K^*)$ is not homotopy abelian over $R$.
\end{corollary}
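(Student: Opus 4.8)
The plan is to deduce Corollary~\ref{cor.nonabelianitykoszul} directly from Corollary~\ref{cor.nonabelianityresolution}, since the Koszul complex is a concrete instance of the kind of complex considered there. First I would set $I=(f_1,\ldots,f_n)$, which is a proper ideal by hypothesis, and observe that each $K^{-i}=\bigwedge^iR^n$ is a finitely generated free $R$-module, with $K^0=R\not=0$ and $K^{-n}=\bigwedge^nR^n\cong R\not=0$, so that $K^*$ is a complex of length $n>0$ of finitely generated free $R$-modules concentrated in degrees $-n,\ldots,0$.

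Next I would check the crucial hypothesis $\debar(K^{-i})\subset IK^{-i+1}$ for every $i$. Recall that the Koszul differential $\debar\colon\bigwedge^iR^n\to\bigwedge^{i-1}R^n$ is given on a standard basis vector $e_{j_1}\wedge\cdots\wedge e_{j_i}$ by the alternating sum $\sum_{k=1}^{i}(-1)^{k+1}f_{j_k}\,e_{j_1}\wedge\cdots\wedge\widehat{e_{j_k}}\wedge\cdots\wedge e_{j_i}$; every term carries a coefficient $f_{j_k}\in I$, so indeed $\debar(K^{-i})\subset I\,K^{-i+1}$ for all $i$. (Here I am using the cohomological indexing $K^{-i}$ in degree $-i$, matching the statement of Corollary~\ref{cor.nonabelianityresolution}.)

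Having verified the two hypotheses, the conclusion is immediate: Corollary~\ref{cor.nonabelianityresolution} applies with $F^*=K^*$, $F^{-i}=K^{-i}$, and the given proper ideal $I$, and yields that $H=\Hom^*_R(K^*,K^*)$ is not homotopy abelian over $R$. There is essentially no obstacle here beyond making sure the degree conventions for the Koszul complex line up with those of the preceding corollary; the real content — the obstruction living in $\Ext^0_R(H\otimes_R^L H,H)$ detected by the nonvanishing of the bracket modulo $I$ — has already been extracted in the proofs of Lemma~\ref{lem.criterioabelianita} and Corollary~\ref{cor.nonabelianityresolution}. For emphasis one may note that the pair of test endomorphisms used there specializes, in the Koszul case, to $\alpha=\Id_{K^0}$ extended by zero and $\beta$ an arbitrary lift of a map $K^{-n}\cong R\to R\cong K^0$ whose image is not contained in $I$, e.g.\ the identification $\bigwedge^nR^n\xrightarrow{\sim}R=K^0$ itself; then $[\alpha,\beta]=\beta$ cannot be an $R$-bilinear coboundary because $d\alpha\in IH^1$ would force $\beta\in IH^n$, contradicting $I\subsetneq R$.
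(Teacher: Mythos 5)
Your proposal is correct and matches the paper's own proof, which likewise disposes of this corollary as an immediate consequence of Corollary~\ref{cor.nonabelianityresolution}; your explicit verification that the Koszul differential satisfies $\debar(K^{-i})\subset IK^{-i+1}$ (since every coefficient in the contraction formula lies in $I=(f_1,\ldots,f_n)$) is exactly the point the paper leaves implicit. The only thing the paper adds is a remark that $\Hom^*_R(K^*,K^*)$ may still be homotopy abelian over a subring $S\subset R$ even when the ideal is proper.
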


\begin{proof} Immediate consequence of Corollary~\ref{cor.nonabelianityresolution}.
Notice however that $\Hom^*_R(K^*,K^*)$ may be homotopy abelian over some subring $S\subset R$ 
even if $(f_1,\ldots,f_n)$ is a proper ideal.

\end{proof}

\bigskip
\section{The endomorphism algebra of Koszul complexes}
\label{sec.endokoszul}

We have already noticed that, except in trivial cases, the DG-Lie algebra $L$ of endomorphisms of the Koszul complex 
of a sequence $f_1,\ldots,f_n\in R$ is not homotopy abelian. In this section we shall prove that if 
$f_1,\ldots,f_n$ is a regular sequence, then $L$ is numerically homotopy abelian.

For every $R$-module $E$ we shall denote by  $E^\vee=\Hom_R(E,R)$ its dual module.
Recall that the  internal product:  
\[ {\bigwedge}^* E^\vee\times {\bigwedge}^* E\xrightarrow{\;\contr\;} {\bigwedge}^{*} E,\]
is the $R$-bilinear map defined recursively by the formulas: 
\[ \begin{split}
f&\contr (v_1\wedge \cdots\wedge v_b)=\sum_{i=1}^b (-1)^{i-1}f(v_i) 
\, v_1\wedge \cdots\wedge\widehat{v_i}\wedge\cdots\wedge v_b\,,\\
(f_1\wedge\cdots\wedge f_a)&\contr (v_1\wedge \cdots\wedge v_b)=
(f_1\wedge\cdots\wedge f_{a-1})\contr(f_a\contr (v_1\wedge \cdots\wedge v_b))\,.\end{split}\]
Let us introduce the  $R$-linear contraction operators: 
\[ \bi_\psi\colon {\bigwedge}^*E\to {\bigwedge}^*E,\qquad \bi_{\psi}(\omega)=\psi\contr \omega,\qquad \psi\in {\bigwedge}^*E^\vee\,;\]
it is immediate to see from the definition above that $\bi_{\psi\wedge\eta}=\bi_\psi\bi_{\eta}$ for every $\psi,\eta\in 
{\bigwedge}^*E^\vee$.

\begin{lemma}\label{lem.kos1} 
In the above setup, consider $\bigwedge^*E$ as a graded $R$-algebra, where the elements of 
$\bigwedge^aE$ have degree $-a$. Then:
\begin{enumerate} 

\item for every $\psi\in E^\vee$ the operator $\bi_{\psi}$ is the unique
 $R$-linear derivation of $\bigwedge^*E$ 
of degree $+1$ such that 
$\bi_\psi(v)=\psi(v)$ for every $v\in E$;

\item $[\bi_{\psi},\bi_{\eta}]=0$ for every $\psi,\eta\in \bigwedge^*E^\vee$.
\end{enumerate}
\end{lemma}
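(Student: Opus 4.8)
The plan is to prove both statements by reducing to the case of a single covector and then using a standard super-Leibniz argument. For part (1), the key observation is that $\bigwedge^* E$ is generated as an $R$-algebra by $E = \bigwedge^1 E$ together with $\bigwedge^0 E = R$, so any $R$-linear derivation of $\bigwedge^* E$ is determined by its values on $E$; uniqueness is therefore immediate. For existence, I would check directly from the defining recursion that $\bi_\psi(v_1 \wedge \cdots \wedge v_b) = \sum_i (-1)^{i-1}\psi(v_i)\, v_1 \wedge \cdots \wedge \widehat{v_i} \wedge \cdots \wedge v_b$ satisfies the graded Leibniz rule with sign convention dictated by $\deg(\bigwedge^a E) = -a$ and $\deg(\bi_\psi) = +1$: that is, $\bi_\psi(\omega \wedge \tau) = \bi_\psi(\omega) \wedge \tau + (-1)^{|\omega|}\omega \wedge \bi_\psi(\tau)$. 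For monomials $\omega = v_1 \wedge \cdots \wedge v_a$ and $\tau = w_1 \wedge \cdots \wedge w_b$ this is a routine sign bookkeeping: the terms omitting a $v_i$ reassemble into $\bi_\psi(\omega)\wedge\tau$, and the terms omitting a $w_j$ pick up exactly the sign $(-1)^a = (-1)^{|\omega|}$ needed to give $(-1)^{|\omega|}\omega \wedge \bi_\psi(\tau)$. Then $\bi_\psi(v) = \psi(v)$ holds by the $b=1$ case of the recursion.

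For part (2), I would first reduce to $\psi, \eta \in E^\vee$. Since $\bi_{\psi \wedge \eta} = \bi_\psi \bi_\eta$ for all $\psi, \eta \in \bigwedge^* E^\vee$ (noted just before the lemma), the operators $\bi_\psi$ for general $\psi$ are compositions of the degree-$+1$ operators $\bi_{\psi_j}$ with $\psi_j \in E^\vee$; a bracket of two such compositions expands, via the graded Jacobi identity, into a sum of terms each involving a bracket $[\bi_{\psi_j}, \bi_{\eta_k}]$ with $\psi_j, \eta_k \in E^\vee$, so it suffices to prove $[\bi_\psi, \bi_\eta] = 0$ for $\psi, \eta \in E^\vee$. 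For covectors, $[\bi_\psi, \bi_\eta] = \bi_\psi \bi_\eta + \bi_\eta \bi_\psi$ (both have odd degree $+1$), and this is again the super-commutator of two odd derivations, hence itself a derivation of degree $+2$; by the uniqueness part of (1)-style reasoning (a derivation of $\bigwedge^* E$ is determined by its restriction to $E$), it is enough to evaluate on $v \in E$. But $\bi_\psi \bi_\eta(v) = \bi_\psi(\eta(v)) = 0$ since $\eta(v) \in R = \bigwedge^0 E$ and $\bi_\psi$ raises degree, and symmetrically $\bi_\eta\bi_\psi(v) = 0$, so $[\bi_\psi,\bi_\eta]$ vanishes on $E$ and hence is zero.

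Alternatively, and perhaps more cleanly for part (2), one can compute $[\bi_\psi,\bi_\eta]$ directly on a monomial $v_1 \wedge \cdots \wedge v_b$: applying $\bi_\eta$ then $\bi_\psi$ produces a double sum over pairs $i < j$ of terms $\pm(\eta(v_i)\psi(v_j) - \psi(v_i)\eta(v_j))$ times $v_1 \wedge \cdots \widehat{v_i} \cdots \widehat{v_j} \cdots \wedge v_b$ with signs, and adding the opposite order $\bi_\psi\bi_\eta$ shows everything cancels in pairs. Both routes are elementary; I do not anticipate a genuine obstacle, only the need to be careful with the sign convention coming from the negative grading on $\bigwedge^* E$. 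The only mildly subtle point worth stating explicitly is that all the ``a derivation is determined by its values on $E$'' arguments require $\bigwedge^* E$ to be generated in degrees $0$ and $-1$, which holds because $E$ is an $R$-module and $\bigwedge^* E$ is its exterior algebra; no freeness or finite generation of $E$ is needed here.
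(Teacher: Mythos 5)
Your proposal is correct and follows essentially the same route as the paper: part (1) via generation of $\bigwedge^*E$ in degrees $0$ and $-1$, and part (2) by observing that for $\psi,\eta\in E^\vee$ the bracket $[\bi_\psi,\bi_\eta]$ is a derivation of degree $+2$ annihilating a generating set, then reducing the general case through $\bi_{\psi\wedge\eta}=\bi_\psi\bi_\eta$ and the graded Leibniz rule for the commutator. You merely supply more detail than the paper (which declares item (1) clear) and add an optional direct monomial computation.
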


\begin{proof} The first item is clear.
For the second, if 
$\psi,\eta\in E^\vee$ then $[\bi_{\psi},\bi_{\eta}]$ is a derivation
of degree $+2$ of 
$\bigwedge^*E$, and therefore $[\bi_{\psi},\bi_{\eta}]$ annihilates a set of  generators of the $R$-algebra $\bigwedge^*E$.
For the general case $\psi,\eta\in \bigwedge^*E^\vee$ the vanishing of $[\bi_{\psi},\bi_{\eta}]$
follows from the relations: 
\begin{enumerate}
\item $\bi_\alpha\bi_\beta=-\bi_\beta\bi_\alpha$ for every $\alpha,\beta\in  E^\vee$;

\item $\bi_{\psi\wedge\eta}=\bi_\psi\bi_{\eta}$ for every 
$\psi,\eta\in \bigwedge^*E^\vee$; 

\item $[\bi_{\psi},\bi_{\eta}\bi_{\gamma}]=
[\bi_{\psi},\bi_{\eta}]\bi_{\gamma}+(-1)^{\deg(\psi)\deg(\eta)}\bi_{\eta}[\bi_{\psi},\bi_{\gamma}]$ for every 
$\psi,\eta,\gamma\in \bigwedge^*E^\vee$.
\end{enumerate}
\end{proof}

From now on we assume that 
$E$ is the free $R$-module generated by  $e_1,\ldots,e_n$. We shall consider ${\bigwedge}^*E^\vee$ as a differential graded commutative algebra, equipped with the trivial differential, and with  the elements in ${\bigwedge}^iE^\vee$ of degree $i$.
For every $f\in E^{\vee}$ we have 
$(\bi_f)^2=\bi_{f\wedge f}=0$ and then, setting $K^{-i}=\bigwedge^{i}E$,  we get a complex of free $R$-modules: 
\[ K^*\colon \qquad 0\to K^{-n}\xrightarrow{\;\bi_f\;}K^{1-n}\xrightarrow{\;\bi_f\;}\cdots
\xrightarrow{\;\bi_f\;}K^{-1}\xrightarrow{\;\bi_f}K^0=R,\,\]
usually called the Koszul complex of the sequence $f_1=f(e_1)$,\ldots, $f_n=f(e_n)$.

\begin{theorem}\label{thm.nhakoszul}  In the above notation,  
the contraction map
\[ \bi\colon {\bigwedge}^*E^\vee\to \Hom^*_R(K^*,K^*)\]
is a morphism of differential graded algebras. 
If $K^*$  is exact then $\bi$  is surjective in cohomology and therefore the DG-Lie algebra 
$\Hom^*_R(K^*,K^*)$ is numerically homotopy abelian.
\end{theorem}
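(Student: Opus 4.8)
The plan is to split the statement into three independent assertions: (i) that $\bi$ is a morphism of DG-algebras; (ii) that $\bi$ is surjective in cohomology when $K^*$ is exact; (iii) that numerical homotopy abelianity follows formally. Assertion (i) is almost immediate from the machinery already set up: the source $\bigwedge^*E^\vee$ carries the zero differential, so we only have to check that $\bi$ kills the differential of $\Hom^*_R(K^*,K^*)$ and is multiplicative. Multiplicativity is exactly the identity $\bi_{\psi\wedge\eta}=\bi_\psi\bi_\eta$ recorded after the statement of Lemma~\ref{lem.kos1}. Compatibility with differentials amounts to showing $d\bi_\psi=0$ in $\End^*(K^*)$, where $d\bi_\psi=\bi_f\circ\bi_\psi-(-1)^{\deg\psi}\bi_\psi\circ\bi_f=[\bi_f,\bi_\psi]$ in the graded-commutator sense (recalling $f\in E^\vee$ has degree $1$, so $\bi_f=\debar$ is the Koszul differential of degree $+1$); this vanishes by Lemma~\ref{lem.kos1}(2). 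So $\bi$ is a morphism of DG-algebras, and in particular a morphism of DG-Lie algebras for the associated (graded) commutator brackets; since the source is graded-commutative its bracket is zero, so $\bigwedge^*E^\vee$ is an abelian DG-Lie algebra.

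The heart of the argument is assertion (ii): that $\bi$ induces a surjection $\bigwedge^*E^\vee=H^*(\bigwedge^*E^\vee)\to H^*(\Hom^*_R(K^*,K^*))$. The key point is the classical computation of the cohomology of the endomorphism complex of a resolution. When $K^*$ is exact, it is a free resolution of the $R$-module $R/(f_1,\dots,f_n)$, say $A$, and $H^i(\Hom^*_R(K^*,K^*))=\Ext^i_R(A,A)$. For a Koszul complex on any sequence this Ext-algebra is well understood; when the sequence is moreover regular, $\Ext^*_R(A,A)\cong\bigwedge^*_A(A^n)$, i.e. the exterior algebra on the ``conormal'' module, and this identification is precisely realized by sending the dual basis vector $e_i^\vee$ to the class of $\bi_{e_i^\vee}$. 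Concretely I would argue by hand: given a cocycle $\phi\in\Hom^{i}_R(K^*,K^*)$, use exactness of $K^*$ (and freeness, hence projectivity, of its terms) to lift/descend $\phi$ up to homotopy to a chain map $K^*\to K^*[i]$, then show that modulo boundaries $\phi$ agrees with $\bi_\psi$ for a suitable $\psi\in\bigwedge^iE^\vee$ built from the components of $\phi$. The regularity hypothesis enters exactly in ensuring that the only obstruction classes are the ``expected'' ones and that $\bi$ hits all of them. Alternatively, and perhaps more cleanly, one can reduce to the case $n=1$ by the multiplicativity in (i) together with the fact that the Koszul complex of a regular sequence $(f_1,\dots,f_n)$ is the tensor product over $R$ of the length-one Koszul complexes $K(f_j)$, and that $\Hom^*_R(-,-)$ turns this tensor product of complexes into the tensor product of endomorphism DG-algebras; a K\"unneth argument then reduces surjectivity in cohomology to the single-variable statement, where $K(f)\colon R\xrightarrow{f}R$ with $f$ a nonzerodivisor has $\End^*$-cohomology equal to $A$ in degree $0$ and $A$ in degree $1$, the degree-$1$ part generated by the contraction $\bi_{e^\vee}$ — a direct and easy check.

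Assertion (iii) is then purely formal: we have a morphism of DG-Lie algebras $\bi\colon\bigwedge^*E^\vee\to\Hom^*_R(K^*,K^*)$ whose source is abelian and which is surjective in cohomology. A one-step zigzag of the shape required in Definition~\ref{def.nhodgla} (with $n=1$, $L_1=M=\bigwedge^*E^\vee$, $f_1=\bi$ surjective in cohomology, $g_1=\mathrm{id}_M$ injective in cohomology, and target $L=\Hom^*_R(K^*,K^*)$) exhibits $\Hom^*_R(K^*,K^*)$ as numerically homotopy abelian, completing the proof.

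\medskip

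\emph{Main obstacle.} The only genuine work is assertion (ii). The delicate part is not the single-variable base case but being careful about the K\"unneth/multiplicativity reduction: one must check that under $K(f_1,\dots,f_n)\cong K(f_1)\otimes_R\cdots\otimes_R K(f_n)$ the contraction map $\bi$ is compatible with the tensor decomposition of the endomorphism DG-algebras, and that the K\"unneth formula for the relevant Ext-groups holds without $\Tor$-correction terms — which is exactly where regularity (so that each partial quotient is again resolved by the remaining Koszul complex, with all higher $\Tor$'s vanishing) is used. If one prefers the direct approach instead, the obstacle is the explicit homotopy-theoretic bookkeeping needed to show every endomorphism cocycle is cohomologous to a contraction; this is standard for Koszul complexes of regular sequences but requires writing down the comparison maps carefully.
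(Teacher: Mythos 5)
Your proposal is correct in outline and parts (i) and (iii) coincide with the paper: multiplicativity of $\bi$ is $\bi_{\psi\wedge\eta}=\bi_\psi\bi_\eta$, compatibility with differentials is $[\bi_f,\bi_\psi]=0$ from Lemma~\ref{lem.kos1}, and a one-step zigzag with abelian source surjective in cohomology gives numerical homotopy abelianity. For the key step (ii), however, you take a genuinely different and heavier route than the paper. You propose to compute $H^*(\Hom^*_R(K^*,K^*))=\Ext^*_R(A,A)$ with $A=R/(f_1,\dots,f_n)$ either by explicit cocycle-versus-coboundary bookkeeping or by a K\"unneth reduction to $n=1$. The paper instead never computes the Ext-algebra as such: since $K^*$ is a bounded complex of free modules (cofibrant) and $K^*\to R/I$ is a quasi-isomorphism when $K^*$ is exact, the induced map $\Hom^*_R(K^*,K^*)\to\Hom^*_R(K^*,R/I)$ is a surjective quasi-isomorphism; the target has \emph{zero differential} because $\bi_f(K^*)\subset I\cdot K^*$; and the composite ${\bigwedge}^*E^\vee\to\Hom^*_R(K^*,R/I)$ is surjective already at the level of graded modules, because $\psi\mapsto(\text{projection to }K^0)\circ\bi_\psi$ is the tautological isomorphism ${\bigwedge}^*E^\vee\cong({\bigwedge}^*E)^\vee=\Hom^*_R(K^*,R)$ followed by reduction mod $I$. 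This buys a three-line argument with no homotopy bookkeeping and no K\"unneth theorem. Your K\"unneth variant can be made to work, but be aware that it is more delicate than ``all higher $\Tor$'s vanish'': the cohomology of $\Hom^*_R(K(f_j),K(f_j))$ consists of copies of $R/(f_j)$, which are not flat, so you need the K\"unneth spectral sequence over a general commutative ring together with an induction in the given order of the regular sequence to kill the $\Tor_{\ge 1}$ terms (this uses precisely that $f_{j+1}$ is a nonzerodivisor on $R/(f_1,\dots,f_j)$); you correctly flag this as the main obstacle, and your direct ``lift cocycles up to homotopy'' alternative is essentially the argument the paper does carry out by hand for the truncated complex in Theorem~\ref{thm.nhakoszultronco}.
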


\begin{proof} The fact that $\bi$ is a morphism of differential graded algebras follows immediately from the formulas $\bi_{\alpha\wedge\beta}=\bi_{\alpha}\bi_{\beta}$ and $[\bi_f,\bi_{\alpha}]=0$.
If $K^*$ is exact and $I=\bi_f(K^{-1})\subset R$ is the ideal generated by $f_1,\ldots,f_n$, then 
the composition of the projections $p\colon K^*\to K^0=R$ and $q\colon R\to R/I$ induces a quasi-isomorphism of complexes 
$qp\colon K^*\to R/I$ and therefore, since $K^*$ is cofibrant, 
the map $qp\colon \Hom^*_R(K^*,K^*)\to \Hom^*_R(K^*,R/I)$ is a surjective quasi-isomorphism. 
Since $\Hom^*_R(K^*,R/I)$ has trivial differential, to conclude the proof  
it is now sufficient to show that the morphism: 
\[ pq\bi\colon {\bigwedge}^*E^\vee\to  \Hom^*_R(K^*,R/I)\]
is surjective. 
This is clear  since 
$q\bi\colon {\bigwedge}^*E^\vee\to  \Hom^*_R(K^*,R)$ is an isomorphism of graded $R$-modules and 
$p\colon \Hom^*_R(K^*,R)\to \Hom^*_R(K^*,R/I)$ is surjective.
\end{proof}

Let now $p\le 0$ be a fixed integer  and denote by 
\[ K^{< p}\colon \qquad 0\to K^{-n}\xrightarrow{\;\bi_f\;}K^{1-n}\xrightarrow{\;\bi_f\;}\cdots
\xrightarrow{\;\bi_f\;}K^{p-1}\]
the truncation of the Koszul complex in degrees $<p$, 
by $j\colon K^{<p}\to K^*$ 
the inclusion and by
$\pi\colon K^*\to K^{<p}$ the projection. 
Notice that $\pi$ is a morphism of complexes, while $j$ is
not a morphism of complexes.

\begin{lemma} For every $\psi,\eta\in \bigwedge^*E^\vee$ we have:
\begin{enumerate}
\item $\pi\bi_\psi=\pi\bi_\psi j\pi\in \Hom^*_R(K^*,K^{<p})$; 

\item $[\pi\bi_\psi j,\pi\bi_{\eta}j]=\pi[\bi_\psi,\bi_{\eta}]j\in \Hom^*_R(K^{<p},K^{<p})$; 

\item the differential in 
$\Hom^*_R(K^{<p},K^{<p})$ is $[\pi\bi_f j,-]$.
\end{enumerate}

In particular,  
\[\bi^{<p} \colon {\bigwedge}^*E^\vee\to \Hom^*_R(K^{<p},K^{<p}),\qquad 
\bi^{<p}_\psi=\pi\bi_{\psi}j\,,\] 
is a morphism of DG-Lie  algebras.
\end{lemma}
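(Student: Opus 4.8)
The plan is to verify the three displayed identities by straightforward manipulation of the operators $\bi_\psi$, $\pi$ and $j$, and then to deduce that $\bi^{<p}$ is a morphism of DG-Lie algebras by combining them with Lemma~\ref{lem.kos1}. The crucial structural fact to keep in mind is that $\pi$ and $j$ are mutually inverse on the underlying graded modules: $\pi j=\operatorname{Id}_{K^{<p}}$, while $j\pi$ is the projector onto $K^{<p}$ inside $K^*$, i.e.\ the identity minus the projection onto the top component $K^{-n}$ when $p=-n$, and more generally the identity minus the projection onto $\bigoplus_{i\ge p}K^{i}$. I would start by recording this and the fact that each $\bi_\psi$ raises degree by $\deg(\psi)$.

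For item (1) the point is that, since $\pi$ kills exactly the components in degrees $\ge p$, we have $\pi\bi_\psi=\pi\bi_\psi (j\pi + r)$ where $r$ is the complementary projector onto degrees $\ge p$; but $\bi_\psi$ lowers the exterior degree (raises cochain degree), so $\bi_\psi r$ lands in components of cochain degree $\ge p$ only if it started even higher, and in any case $\pi\bi_\psi r=0$ because $\bi_\psi$ maps $K^{\ge p}$ into $K^{\ge p}$ when $\deg\psi\ge 0$ — wait, more carefully: $\bi_\psi$ sends $K^{i}$ to $K^{i+\deg\psi}$, so it sends degrees $\ge p$ to degrees $\ge p$ (as $\deg\psi\ge 0$), hence $\pi\bi_\psi r=0$, giving $\pi\bi_\psi=\pi\bi_\psi j\pi$. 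For item (2) I would write $[\pi\bi_\psi j,\pi\bi_\eta j]=\pi\bi_\psi j\pi\bi_\eta j\pm\pi\bi_\eta j\pi\bi_\psi j$ and then absorb the inner $j\pi$ using item~(1) in the form $\pi\bi_\eta j\pi=\pi\bi_\eta$ (and similarly with $\psi$), so that $\pi\bi_\psi j\pi\bi_\eta j=\pi\bi_\psi\bi_\eta j$; subtracting the graded-symmetric term yields $\pi[\bi_\psi,\bi_\eta]j$. For item (3), the differential on $\Hom^*_R(K^{<p},K^{<p})$ is the commutator with the differential $\bi_f^{<p}=\pi\bi_f j$ of the truncated complex, which is exactly $\pi\bi_f j$ since $\pi\bi_f j$ is by definition the induced differential on the truncation; this is essentially a restatement of how the truncated complex is built, but one should check the sign conventions match Definition~\ref{def.dgla}.

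Finally, to conclude that $\bi^{<p}$ is a morphism of DG-Lie algebras, I would combine the three items: item~(2) shows $\bi^{<p}$ is bracket-preserving up to the identity $[\bi^{<p}_\psi,\bi^{<p}_\eta]=\pi[\bi_\psi,\bi_\eta]j=\bi^{<p}_{[\psi,\eta]}$, and since $\bigwedge^*E^\vee$ is graded-commutative with trivial differential the bracket $[\psi,\eta]$ vanishes, so both sides are zero and compatibility with brackets is automatic; meanwhile item~(3) together with item~(1) gives $d(\bi^{<p}_\psi)=[\pi\bi_f j,\pi\bi_\psi j]=\pi[\bi_f,\bi_\psi]j=0$ by Lemma~\ref{lem.kos1}(2), matching the trivial differential on $\bigwedge^*E^\vee$. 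Hence $\bi^{<p}$ is a morphism of DG-Lie algebras (indeed lands in the cocycles).

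The main obstacle I expect is bookkeeping with the projector $j\pi$ in item~(1): one must be careful that $\bi_\psi$ preserves the "high-degree" complement so that the correction term drops out, and this uses crucially that $\psi$ has non-negative degree and that $\bi_\psi$ only \emph{raises} cochain degree. A secondary subtlety is that $j$ is not a chain map, so one cannot naively say $\bi^{<p}$ is a composite of chain maps; the identities (1)--(3) are precisely what repairs this, and getting the signs in the graded Leibniz/commutator conventions to line up with Definition~\ref{def.dgla} is the only genuinely delicate point.
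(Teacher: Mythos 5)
Your proposal is correct and follows essentially the same route as the paper: item (1) from the observation that $x-j\pi(x)\in K^{\ge p}=\ker\pi$ and that $\bi_\psi$ (having non-negative degree) preserves $K^{\ge p}$; item (2) by absorbing the inner $j\pi$ via item (1); item (3) by identifying $\pi\bi_f j$ with the truncated differential; and the final assertion by combining these with Lemma~\ref{lem.kos1}(2). Your concluding paragraph merely spells out what the paper dismisses as ``trivial from the above items'', and does so correctly.
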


\begin{proof} For every $x\in K^*$ we have $x-j\pi(x)\in K^{\ge p}=\ker \pi$ 
and $\bi_\psi( K^{\ge p})\subset K^{\ge p}$. This implies that $\pi\bi_\psi=\pi\bi_\psi j\pi$. 
The second item follows from the equalities
\[ \pi\bi_\psi j\pi\bi_{\eta}j=\pi\bi_\psi\bi_{\eta}j,\qquad 
\pi\bi_\eta j\pi\bi_{\psi}j=\pi\bi_\eta\bi_{\psi}j\;,\]
and the third from the fact that $\pi\bi_f j$ is the differential in $K^{<p}$.
The fact that $\bi^{<p}$ is a morphism of DG-Lie algebra follows trivially from the above items.
\end{proof}

\begin{theorem}\label{thm.nhakoszultronco} 
If the Koszul complex $K^*$  is exact, then the morphism of DG-Lie algebras 
\[ \bi^{<0}\colon \bigwedge^{*}E^\vee\to \Hom^*_R(K^{<0},K^{<0})\]
is surjective in cohomology, and then $\Hom^*_R(K^{<0},K^{<0})$ is numerically homotopy abelian.
\end{theorem}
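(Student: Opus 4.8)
The plan is to follow the scheme of the proof of Theorem~\ref{thm.nhakoszul}, replacing the acyclic tail $K^0$ of $K^*$ by the single cohomology module of the truncation. Set $I=(f_1,\dots,f_n)$ and let $I[1]$ denote the complex with $I$ in degree $-1$ and $0$ elsewhere. Since $K^*$ is exact, the cohomology of $K^{<0}$ is concentrated in degree $-1$, where it is canonically isomorphic to $I=\bi_f(K^{-1})$; hence the canonical projection onto the top cohomology is a surjective quasi-isomorphism of complexes $\Phi\colon K^{<0}\to I[1]$ equal to $\bi_f\colon K^{-1}\to I$ in degree $-1$ and to $0$ in the other degrees. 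As $K^{<0}$ is a bounded complex of finitely generated free modules, hence cofibrant, post-composition with $\Phi$ yields a surjective quasi-isomorphism
\[
\Phi_*\colon \Hom^*_R(K^{<0},K^{<0})\longrightarrow \Hom^*_R(K^{<0},I[1]) .
\]
Consequently it is enough to show that $\Phi_*\circ\bi^{<0}$ is surjective in cohomology: $\bigwedge^{*}E^\vee$ is abelian and $\bi^{<0}$ is a morphism of DG-Lie algebras, so the surjectivity of $H^*(\bi^{<0})$ exhibits $\Hom^*_R(K^{<0},K^{<0})$ as numerically homotopy abelian.

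Next I would make $\Phi_*\circ\bi^{<0}$ explicit. Since $\Phi\circ\pi\colon K^*\to I[1]$ equals $\bi_f$ on $K^{-1}$ and $0$ on all other summands, and since $\bi_f\bi_\psi=\bi_{f\wedge\psi}$, for $\psi\in\bigwedge^{d}E^\vee$ the cocycle $\Phi_*(\bi^{<0}_\psi)\in\Hom^{d}_R(K^{<0},I[1])$ is the map $K^{-1-d}=\bigwedge^{1+d}E\to I$, $x\mapsto\bi_{f\wedge\psi}(x)$, and is $0$ on the remaining summands. On the other side, $\Hom^{d}_R(K^{<0},I[1])=\Hom_R(K^{-1-d},I)$ for $0\le d\le n-1$ (and $0$ otherwise), with differential $\phi\mapsto\pm\,\phi\circ\bi_f$, which — in contrast with Theorem~\ref{thm.nhakoszul} — is \emph{not} zero. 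Using the isomorphism of graded $R$-modules $\bigwedge^{*}E^\vee\xrightarrow{\;\sim\;}\Hom^*_R(K^*,R)$, $\omega\mapsto\bi_\omega$, from the proof of Theorem~\ref{thm.nhakoszul}, a $d$-cocycle of $\Hom^*_R(K^{<0},I[1])$ is precisely an $\omega\in\bigwedge^{1+d}E^\vee$ with $f\wedge\omega=0$ and $\bi_\omega(\bigwedge^{1+d}E)\subseteq I$, and $\Phi_*(\bi^{<0}_\psi)$ corresponds to $f\wedge\psi$ (which always satisfies both conditions, since $\bi_{f\wedge\psi}(\bigwedge^{1+d}E)=\bi_f\bi_\psi(\bigwedge^{1+d}E)\subseteq\bi_f(\bigwedge^{1}E)=I$). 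The statement therefore reduces to showing that every such $\omega$ belongs to $f\wedge\bigwedge^{d}E^\vee$.

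The key input here is the self-duality of the Koszul complex: up to sign, a degree shift and a twist by the invertible module $\bigwedge^{n}E^\vee$, the complex $(\bigwedge^{*}E^\vee,\ f\wedge-)$ is isomorphic to $K^*$, so exactness of $K^*$ forces $(\bigwedge^{*}E^\vee,\ f\wedge-)$ to be exact in all degrees $<n$. Hence for $0\le d\le n-2$ we get $\{\omega\in\bigwedge^{1+d}E^\vee:\ f\wedge\omega=0\}=f\wedge\bigwedge^{d}E^\vee$, which disposes of these degrees (and there the side condition $\bi_\omega(\bigwedge^{1+d}E)\subseteq I$ plays no role). In the top degree $d=n-1$ the condition $f\wedge\omega=0$ is empty, so one must instead exploit $\bi_\omega(\bigwedge^{n}E)\subseteq I$: writing $\bigwedge^{n}E^\vee=R\,e_1^*\wedge\cdots\wedge e_n^*$ one has $\bi_\omega(\bigwedge^{n}E)=(r)$ for $\omega=r\,e_1^*\wedge\cdots\wedge e_n^*$, so the admissible $\omega$ are exactly those with $r\in I$, while $f\wedge\bigwedge^{n-1}E^\vee=I\cdot(e_1^*\wedge\cdots\wedge e_n^*)$ by a one-line computation. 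Thus $\Phi_*\circ\bi^{<0}$ is surjective onto cocycles, a fortiori onto cohomology, in every degree, and the theorem follows. The main obstacle is exactly this last analysis: because the differential of $\Hom^*_R(K^{<0},I[1])$ no longer vanishes, surjectivity cannot be deduced formally as in Theorem~\ref{thm.nhakoszul}; it requires the explicit description of the cocycles, the use of Koszul self-duality in low degrees, and a separate treatment of the top degree, where it is membership in $I$, rather than exactness, that makes the argument work.
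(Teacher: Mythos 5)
Your proof is correct and follows essentially the same route as the paper's: both reduce, via cofibrancy of $K^{<0}$, to the surjective quasi-isomorphism $\Hom^*_R(K^{<0},K^{<0})\to\Hom^*_R(K^{<0},I[1])$ and then invoke the self-duality of the Koszul complex to produce preimages of cocycles in degrees $\le n-2$. The only (harmless) divergence is in the top degree $n-1$, where the paper argues by degree reasons ($\bi^{<0}$ is already an isomorphism onto $\Hom^{n-1}_R(K^{<0},K^{<0})$ and there is nothing in degree $n$), whereas you identify the cocycles of $\Hom_R(\bigwedge^nE,I)$ explicitly as $I\cdot e_1^*\wedge\cdots\wedge e_n^*=f\wedge\bigwedge^{n-1}E^\vee$; both arguments are valid.
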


\begin{proof} Denote as above by $I\subset R$ the ideal generated by $(f_1,\ldots,f_n)$, then the map 
$\bi_f\colon K^{-1}\to I$
gives a quasi-isomorphism of complexes $p\colon K^{<0}\to I[1]$ and then, since $K^{<0}$ is cofibrant
the induced map 
\begin{equation}\label{equ.qik}
\Hom^*_R(K^{<0},K^{<0})\to \Hom^*_R(K^{<0},I[1])\end{equation}
is a quasi-isomorphism. In particular the cohomology of 
$\Hom^*_R(K^{<0},K^{<0})$ is concentrated in degrees $0,\ldots,n-1$.
Since 
\[\bi^{<0}\colon {\bigwedge}^{n-1}E^\vee\to \Hom_R({\bigwedge}^{ n}E,E)=\Hom^{n-1}_R(K^{<0},K^{<0}),\]
is an isomorphism and $\Hom^*_R(K^{<0},K^{<0}))$ is concentrated in degrees $\le n-1$, 
the map ${\bigwedge}^{n-1}E^\vee\to 
H^{n-1}(\Hom^*_R(K^{<0},K^{<0}))$ is surjective.
 
Let now $0\le m<n-1$ be a fixed integer, and let  
$\alpha\in \Hom^m_R(K^{<0},K^{<0})$ be a cocycle, and  
consider its components 
\[ \alpha_{-m-1-i}\colon K^{-m-1-i}\to K^{-1-i},\qquad i=0,\ldots,n-1-m,\]
together with  the morphism 
\[\beta=\bi_f\alpha_{-m-1}\colon K^{-m-1}\to R\;.\]

Since $\alpha$ is a cocycle, $\alpha_{-m-1}\bi_f=\pm \bi_f\alpha_{-m-2}$ and then 
$\beta \bi_f=\pm (\bi_f)^2\alpha_{-m-2}=0$. Equivalently 
$\beta$ is a cocycle  of degree $m+1$ in the dual Koszul complex $\Hom_R^*(K^*,R)$.
Since $m+1<n$, by the self-duality of the Koszul complex \cite[Prop. 17.15]{eisenbud}, the cocycle 
$\beta$ is also a coboundary and there exists 
$\gamma\colon \bigwedge^{m}E\to R$ such that $\beta=(-1)^m\gamma \bi_f$.
In view of the isomorphism $\bigwedge^{m}E^\vee\simeq (\bigwedge^{m}E)^\vee$ there exists $\eta\in 
\bigwedge^{m}E^\vee$ such that 
$\bi_{\eta}(x)=\gamma(x)$ for every $x\in K^{-m}$.

For every 
$x\in K^{-m-1}$ we have $\beta(x)=(-1)^m\bi_{\eta}\bi_f(x)$ and then
\[ \bi_f(\alpha-\bi_{\eta})(x)=\beta(x)-\bi_f\bi_{\eta}(x)=\beta(x)-(-1)^m\bi_{\eta}\bi_f(x)=0\,.\]

We have therefore proved that $\alpha$ and $\bi_{\gamma}$ have the same image in 
$\Hom^*_R(K^{<0},I[1])$, and the conclusion follows by \eqref{equ.qik}.
\end{proof}

\bigskip
\section{Deformations of coherent sheaves via locally free resolutions}

In this section we briefly recall the construction of the DG-Lie algebra controlling the deformation theory of a coherent sheaf given in \cite{FIM,meazzini}.
Here and in the following we denote by $\K$ a fixed field of characteristic 0, by $\Set$, $\Grpd$ and $\Art_{\K}$ the categories of 
Sets, Groupoids and local Artin $\K$-algebras  with residue
field $\mathbb K$, respectively. For every $A\in \Art_{\K}$ we shall denote by $\mathfrak{m}_A$ its maximal ideal.

\subsection{Deligne groupoids} For every DG-Lie algebra $L=\oplus_{i\in\mathbb Z} L^i$ over $\K$, we denote by $\sC_L\colon \Art_\K\to\Grpd$ the action groupoid of the gauge action 
\[ \exp(L^0\otimes\mathfrak{m}_A)\times \MC_L(A)\xrightarrow{\;*\;}\MC_L(A),\]
where 
\[A\in \Art_{\K},\qquad \MC_L(A)=\{x\in L^1\otimes\mathfrak{m}_A\mid dx+\frac{1}{2}[x,x]=0\}\]
is the set of Maurer-Cartan element and the gauge action may be defined by the formula, see e.g. \cite{GoMil1,ManRendiconti,ManettiSeattle}:
\[
e^a \ast x:=x+\sum_{n\geq 0} \frac{ [a,-]^n}{(n+1)!}([a,x]-da)\,.
\]
We also denote by $\Def_L=\pi_0(\sC_L)\colon \Art_\K\to\Set$ the deformation functor associated to $L$, namely:
\[\Def_L(A)=\frac{\operatorname{MC}(L\otimes\mathfrak m_A)}{\;\operatorname{exp}(L^0\otimes\mathfrak m_A)\;},\qquad 
A\in \Art_{\K}\,.\]
Given two objects $x,y$ of $\sC_{L}(A)$, the morphisms between them are 
\[ \Mor_{\sC_{L}}(x,y)=\{e^a\in \exp(L^0\otimes \mathfrak{m}_A)\mid e^a\ast x=y\}.\]
The irrelevant stabilizer of an element $x\in\MC_L(A)$  is the subgroup of 
$I(x)\subseteq \operatorname{Mor}_{\sC_L}(x,x)$ defined in the following way:  
\[I(x)=\left\{e^{du+[x,u]}\;\middle|\;u\in\, L^{-1}\otimes \mathfrak{m}_A\right\}.\]
Then the the Deligne groupoid $\Del_L\colon \Art_{\K}\to \Grpd$ is defined by taking the same objects as $\sC_L$ and by morphisms
\[\operatorname{Mor}_{\Del_L}(x,y)=\frac{\operatorname{Mor}_{\sC_L}(x,y)}{I(x)}\cong
\frac{\operatorname{Mor}_{\sC_L}(x,y)}{I(y)}\,.\]
We refer to \cite{FIM,semireg2011} for full details and for the proof that $\Del_L$ is properly defined. 
The above functors are homotopy invariant in the sense described by  the following theorem:

\begin{theorem}\label{homotopic_invariance}
Let $f\colon L\to M$ be a quasi-isomorphism of differential graded Lie algebras. Then
\begin{enumerate}
\item The induced morphism on deformation functors $f\colon\Def_L\to\Def_M$
is an isomorphism.
\item The induced morphism $\Del_L\to\Del_L$ is an equivalence 
of groupoids. If furthermore $L$ and $M$ are positively graded, the equivalence holds
at the level of action groupoids $\sC_L\to\sC_M$.
\end{enumerate}
\end{theorem}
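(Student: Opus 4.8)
The plan is to derive part (1) from the standard smoothness criterion for morphisms of deformation functors, and then to bootstrap part (2) from part (1) by analysing the isotropy groups of the Deligne groupoid; the positively graded statement will then be immediate. For part (1), recall that $\Def_L$ and $\Def_M$ are deformation functors admitting a natural complete obstruction theory with tangent space $H^1(-)$ and obstruction space $H^2(-)$: for a small extension $0\to J\to A\to B\to0$ in $\Art_{\K}$ and $x\in\MC_L(B)$, one chooses a lift $\tilde x\in L^1\otimes\mathfrak{m}_A$ and the class of $d\tilde x+\tfrac12[\tilde x,\tilde x]$ in $H^2(L)\otimes J$ obstructs lifting $[x]$ to $\Def_L(A)$, vanishing exactly when a lift exists. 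This construction is manifestly functorial in the DG-Lie algebra, so $f$ induces a morphism of obstruction theories lying over $\Def_L\to\Def_M$. Since $f$ is a quasi-isomorphism it is bijective on $H^1$ and $H^2$, hence bijective on tangent spaces and injective on obstruction spaces; the standard criterion (see e.g. \cite{GoMil1,ManettiSeattle}) then yields that $\Def_L\to\Def_M$ is an isomorphism of functors.

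For part (2), a morphism of groupoids is an equivalence if and only if it is essentially surjective and fully faithful. Essential surjectivity of $\Del_L(A)\to\Del_M(A)$ is precisely the surjectivity of $\Def_L(A)\to\Def_M(A)$ from part (1), and the injectivity half of part (1) shows that $x,y\in\MC_L(A)$ are gauge equivalent exactly when $f(x),f(y)$ are; composing with a fixed morphism then identifies, compatibly along $f$, the hom-set $\Mor_{\Del_L}(x,y)$ with $\Aut_{\Del_L}(x)$ and $\Mor_{\Del_M}(f(x),f(y))$ with $\Aut_{\Del_M}(f(x))$. Thus full faithfulness reduces to the statement that $\Aut_{\Del_L}(x)\to\Aut_{\Del_M}(f(x))$ is bijective for every $x\in\MC_L(A)$, which we prove by induction on the length of $A$, the case $A=\K$ being trivial. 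For a small extension $0\to J\to A\to B\to0$ and $x_B$ the image of $x$ in $\MC_L(B)$, the hypothesis $\mathfrak{m}_A J=0$ makes the twist $[x,-]$ invisible on the $J$-part, and a direct manipulation of the gauge-action formula identifies the kernel of the restriction $\Aut_{\Del_L}(x)\to\Aut_{\Del_L}(x_B)$ canonically with $H^0(L)\otimes J$ and identifies the obstruction to lifting an element of $\Aut_{\Del_L}(x_B)$ with a canonical class in $H^1(L)\otimes J$. These constructions are natural in $L$, and since $f$ is an isomorphism on $H^0$ and $H^1$ they match the analogous data for $M$, so the five lemma propagates bijectivity from $B$ to $A$. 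Finally, if $L$ and $M$ are concentrated in non-negative degrees then $L^{-1}=M^{-1}=0$, the irrelevant stabilizers are trivial and $\Del_L=\sC_L$, $\Del_M=\sC_M$, so the equivalence just obtained is one of action groupoids.

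The step I expect to be the main obstacle is the inductive argument in part (2): one must extract from the explicit nonlinear gauge-action formula a \emph{canonical} and \emph{functorial} description of the pro-unipotent (and in general nonabelian) isotropy group $\Aut_{\Del_L}(x)$ in terms of $H^0$ and $H^1$, and carefully check that every identification used in the five-lemma step is compatible with $f$. By contrast part (1) is a black-box application of the standard criterion, and the passage to action groupoids in the positively graded case is immediate once the irrelevant stabilizers are seen to vanish.
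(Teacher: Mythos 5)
The paper does not actually prove this theorem: the authors record it as ``nowadays well known'' and point to \cite{GoMil1,Konts,ManettiDGLA,ManRendiconti}, so there is no in-paper argument to compare against. What you have written is essentially the standard proof contained in those references: part (1) as a black-box application of the inverse function theorem for deformation functors (tangent map bijective, obstruction map injective, both induced by $H^1(f)$ and $H^2(f)$), and part (2) by reducing full faithfulness to isotropy groups and inducting along small extensions. Part (1) and the reduction steps in part (2) (essential surjectivity equals surjectivity of $\Def_L\to\Def_M$; empty hom-sets handled by injectivity of $\Def_L\to\Def_M$; the positively graded case via triviality of the irrelevant stabilizers) are all fine.

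One intermediate claim in your inductive step is not correct as stated for general DG-Lie algebras: the kernel of $\Aut_{\Del_L}(x)\to\Aut_{\Del_L}(x_B)$ is in general only a \emph{quotient} of $H^0(L)\otimes J$, not $H^0(L)\otimes J$ itself. After normalising a kernel element to a representative $e^a$ with $a\in Z^0(L)\otimes J$, two such representatives differ by an irrelevant stabilizer $e^{du+[x,u]}$ with $u\in L^{-1}\otimes\mathfrak{m}_A$ satisfying $d\bar u+[x_B,\bar u]=0$; when $\bar u\neq 0$ the class of $du+[x,u]$ in $H^0(L)\otimes J$ need not lie in $B^0(L)\otimes J$, so extra identifications occur. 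The correct kernel is the cokernel of a connecting-type map from the degree $-1$ twisted cocycles of $(L\otimes\mathfrak{m}_B,\,d+[x_B,-])$ into $H^0(L)\otimes J$. This does not sink the argument --- the corrected description is still functorial, and an auxiliary induction shows a quasi-isomorphism induces isomorphisms on all the twisted cohomologies $H^*(L\otimes\mathfrak{m}_A,\,d+[x,-])$, so the five lemma still propagates bijectivity --- but you must run the comparison with this corrected exact sequence rather than with $H^0(L)\otimes J$. Note that you cannot sidestep this by assuming $L^{-1}=0$: in the application in this paper the local DG-Lie algebras $\Hom^*_R(K^*,K^*)$ have nontrivial negative components (only their negative cohomology vanishes), so the general case genuinely matters.
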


\begin{proof} This is nowadays well known: different proofs can be found for instance in 
\cite{GoMil1,Konts,ManettiDGLA,ManRendiconti}.
\end{proof}

\subsection{Totalization of semicosimplicial DG-Lie algebras}
Let  
\[L\colon\qquad\xymatrix{L_0\ar@<0.5ex>[r]\ar@<-0.5ex>[r] &L_1\ar@<1ex>[r]\ar@<0ex>[r]\ar@<-1ex>[r] &L_2\ar@<1.5ex>[r]\ar@<0.5ex>[r]\ar@<-0.5ex>[r]\ar@<-1.5ex>[r]&\dots}\]
be a semicosimplicial DG-Lie algebra: this means that every $L_i$ is a DG-Lie algebra and the arrows in the diagram are the morphisms of DG-Lie algebras
\[ \delta_k\colon L_{n-1}\to L_n,\qquad k=0,\ldots,n\,,\]
subjected to the semicosimplicial identities $\delta_l\delta_k=\delta_{k+1}\delta_l$, for any $l\leq k$.

The cochain complex $C(L)$ associated to $L$ is defined as the differential graded vector space
\[C(L)=\left(\prod_{n\geq 0}{L_n[-n]},\; d+\delta\right)\,.\]
More precisely, for every degree $p\in \Z$ we have $C(L)^p=\prod_{n\geq0}{L_n^{p-n}}$, and the differential 
$d+\delta\colon C(L)^p\to C(L)^{p+1}$ is the sum of:
 \begin{itemize}
 \item $d=\sum_{n\geq 0}{(-1)^n d_n}$, where  $d_n$ is the differential of $L_n$;
 \item $\delta=\sum_{k\geq 0}{(-1)^k \delta_k}$.
 \end{itemize}

For every integer $n\ge 0$, denote by 
\[\Omega_n=\frac{\mathbb K[t_0,\dots,t_n,dt_0,\dots dt_n]}{(1-\sum{t_i},\sum{dt_i})}\]
the differential graded algebra of polynomial differential forms on the standard simplex of dimension $n$. 
Then the collection $\Omega_{\bullet}=\{\Omega_n\}_{n\ge 0}$ has a natural structure of  simplicial differential graded algebra, and the face operators
\[\delta_k^*\colon  \Omega_n\to \Omega_{n-1},\qquad k=0,\ldots,n\,,\]
are the morphisms of differential graded algebras such that 
\[ \delta_k^*(t_i)=\begin{cases} t_i&\text{ if }i< k\\
0& \text{ if }i=k\\
t_{i-1}&\text{ if }i>k\end{cases}\]
 
The (Thom-Whitney-Sullivan) totalization of the  
 semicosimplicial DG-Lie algebra $L$ is the DG-Lie subalgebra of  $\prod_{n\geq 0}\Omega_n\otimes L_n$ defined by 
 \[\operatorname{Tot}(L)=\left\{(x_n)\in\prod_{n\geq 0}{\Omega_n\otimes L_n}\; \middle|\; 
(\delta_k^*\otimes Id)x_n=(Id\otimes\delta_k)x_{n-1}
\;\text{for every }0\leq k\leq n\right\}\]
 
By a well known theorem by Whitney, see e.g. \cite{getzler04,lunardon,navarro} and references therein,  there exists a natural pair of quasi-isomorphisms of complexes
$C(L)\xrightarrow{E}\operatorname{Tot}(L)\xrightarrow{I}C(L)$ such that $IE=Id$. In particular, the  cohomology of the totalization $\operatorname{Tot}(L)$ is the same  of the cohomology of the cochain complex $C(L)$.  

\begin{remark}\label{rem.spectralsequence}
For later use we point out that the composition of $I\colon \operatorname{Tot}(L)\to C(L)$ with the projection 
$C(L)\to L_0$ is a morphism of DG-Lie algebras, since it is the natural projection 
$\operatorname{Tot}(L)\to L_0$. The complex $C(L)$ admits the complete and exhaustive filtration $F^p=\prod_{n\geq p}{L_n[-n]}$ whose the first page in the associated spectral sequence is $E_1^{p,q}=H^p(L_p)$. Moreover the natural map 
\[ H^n(\operatorname{Tot}(L))=H^n(C(L))\twoheadrightarrow E_{\infty}^{0,n}\hookrightarrow E_{1}^{0,n}=H^n(L_0)\]
coincides with the map induced in cohomology by the projection $\operatorname{Tot}(L)\to L_0$.  
\end{remark}

\subsection{Descent of Deligne groupoids}
Let  
\[\xymatrix{G\colon\qquad &G_0\ar@<0.5ex>[r]\ar@<-0.5ex>[r] &G_1\ar@<1ex>[r]\ar@<0ex>[r]\ar@<-1ex>[r] &G_2\ar@<1.5ex>[r]\ar@<0.5ex>[r]\ar@<-0.5ex>[r]\ar@<-1.5ex>[r]&\dots}\]
be a semicosimplicial groupoid, i.e., a semicosimplicial object in the category $\Grpd$.
Denoting as above by $\delta_k$ the face operators in $G$, by semicosimplicial identities we get  in particular that     \[\delta_0\delta_0=\delta_1\delta_0,\qquad  \delta_0\delta_1=\delta_2\delta_0,\qquad  \delta_1\delta_1=\delta_2\delta_1\,.\]
 Mimicking the construction of nonabelian 1-cocycles 
we define $Z^1(G)$ as the set of 
pairs 
\[(l,m)\in \operatorname{Obj}(G_0)\times\operatorname{Mor}(G_1)\]
such that  $m\colon\delta_0 l\to\delta_1 l$ and the  \emph{cocycle diagram} 
\begin{equation}\label{equ.cocyclediagram}
\begin{matrix}\xymatrix{&\delta_0\delta_0l\ar@{=}[dl]\ar[dr]^{\delta_0m}&\\
\delta_1\delta_0l\ar[d]^{\delta_1m}\;\;&&\;\;\delta_0\delta_1l\ar@{=}[d]\\
\delta_1\delta_1l\ar@{=}[dr]\;\;&&\;\;\delta_2\delta_0l\ar[dl]_{\delta_2m}\\
&\delta_2\delta_1l&}\end{matrix}
\end{equation}
is commutative in $G_2$. The total descent 
groupoid $\operatorname{Tot}(G)$ is defined in the following way:
\begin{itemize}

\item  the set of objects of  $\operatorname{Tot}(G)$ is  $Z^1(G)$;

\item the morphisms between $(l_0,m_0),$ and $(l_1,m_1)$ are morphisms $a\in \operatorname{Mor}(G_0)$ between $l_0$ and $l_1$ making the following diagram:
 \[\xymatrix{\delta_0 l\ar[r]^{m_0}\ar[d]^{\delta_0 a} &\delta_1 l_0\ar[d]^{\delta_1 a}\\
 		\delta_0 l_1\ar[r]^{m_1} &\delta_1l_1}\]
 		 commutative in $G_1$.	
\end{itemize}

\begin{theorem}\cite[Theorem 7.6]{FIM}\label{thm.descent}
Let  
\[L\colon\qquad\xymatrix{L_0\ar@<0.5ex>[r]\ar@<-0.5ex>[r] &L_1\ar@<1ex>[r]\ar@<0ex>[r]\ar@<-1ex>[r] &L_2\ar@<1.5ex>[r]\ar@<0.5ex>[r]\ar@<-0.5ex>[r]\ar@<-1.5ex>[r]&\dots}\]
be a semicosimplicial DG-Lie algebra such that $H^j(L_i)=0$ for every $i$ and every $j<0$. Consider the associated totalization $\operatorname{Tot}(L)$ and the corresponding  semicosimplicial Deligne groupoid:
  \[\xymatrix{\Del_L\colon &\Del_{L_0}\ar@<0.5ex>[r]\ar@<-0.5ex>[r] &\Del_{L_1}\ar@<1ex>[r]\ar@<0ex>[r]\ar@<-1ex>[r] &\Del_{L_2}\ar@<1.5ex>[r]\ar@<0.5ex>[r]\ar@<-0.5ex>[r]\ar@<-1.5ex>[r]&\dots}\,.\]
Then there exists a natural isomorphism of functors $\Art_{\K}\to\Set$
  \[\Def_{\operatorname{Tot}(L)}\cong\pi_0\operatorname{Tot}(\Del(L))\,.\]
 \end{theorem}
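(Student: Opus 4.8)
The statement to prove is Theorem \ref{thm.descent} from \cite{FIM}, namely that for a semicosimplicial DG-Lie algebra $L$ with $H^j(L_i)=0$ for all $i$ and all $j<0$, there is a natural isomorphism $\Def_{\Tot(L)}\cong\pi_0\Tot(\Del(L))$.

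\textbf{Plan of proof.} The strategy is to compare both sides degree by degree in the simplicial direction using the descent description of $\Tot(\Del(L))$ recalled above. First I would unwind $\pi_0\Tot(\Del(L))(A)$ for $A\in\Art_\K$: by definition its objects are pairs $(l,m)$ with $l\in\MC_{L_0}(A)$, $m$ a morphism in $\Del_{L_1}(A)$ from $\delta_0 l$ to $\delta_1 l$ (i.e.\ the class modulo irrelevant stabilizers of some $e^{a}$ with $a\in L_1^0\otimes\mathfrak m_A$ and $e^a\ast\delta_0 l=\delta_1 l$) satisfying the cocycle diagram \eqref{equ.cocyclediagram} in $\Del_{L_2}(A)$; and two such pairs are identified when there is a morphism in $\Del_{L_0}(A)$ intertwining them in the sense of the square displayed before the theorem. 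On the other side, $\Def_{\Tot(L)}(A)=\MC_{\Tot(L)}(A)/\exp(\Tot(L)^0\otimes\mathfrak m_A)$.

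\textbf{Key steps.} The construction of the map $\Def_{\Tot(L)}\to\pi_0\Tot(\Del(L))$ goes through the explicit totalization $\Tot(L)\subset\prod_n\Omega_n\otimes L_n$. Given a Maurer--Cartan element $\xi=(\xi_n)\in\MC_{\Tot(L)}(A)$, I would extract its components in the low simplicial degrees: the degree-one part $\xi_0\in(\Omega_0\otimes L_0)^1\otimes\mathfrak m_A=L_0^1\otimes\mathfrak m_A$ is a Maurer--Cartan element of $L_0$, giving the object $l$; the component $\xi_1\in(\Omega_1\otimes L_1)^1\otimes\mathfrak m_A$ has a piece in $\Omega_1^1\otimes L_1^0$, a polynomial $1$-form on $\Delta^1$ valued in $L_1^0\otimes\mathfrak m_A$, whose "holonomy'' (path-ordered exponential / gauge transformation obtained by integrating from vertex $0$ to vertex $1$) produces a gauge element $e^{a}\in\exp(L_1^0\otimes\mathfrak m_A)$; one checks $e^a\ast\delta_0 l=\delta_1 l$ using the Maurer--Cartan equation in simplicial degree one together with the compatibility constraint $(\delta_k^*\otimes\Id)\xi_1=(\Id\otimes\delta_k)\xi_0$, so that $m=[e^a]$ is a well-defined morphism $\delta_0 l\to\delta_1 l$ in $\Del_{L_1}(A)$. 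The cocycle diagram in $\Del_{L_2}(A)$ comes from the Maurer--Cartan equation in simplicial degree two, which forces the holonomies around the boundary of $\Delta^2$ to compose correctly once one passes to the Deligne quotient (this last passage, killing the irrelevant stabilizer, is exactly where the degree-two data only controls the composite up to an irrelevant automorphism, hence one lands in $\Del$ and not in $\sC$). Then: (i) gauge-equivalent Maurer--Cartan elements in $\Tot(L)$ give isomorphic pairs in $\Tot(\Del(L))$, by integrating the degree-zero part of the gauge parameter to a morphism in $\Del_{L_0}(A)$; (ii) the resulting transformation $\Def_{\Tot(L)}\to\pi_0\Tot(\Del(L))$ is natural in $A$, which is immediate from functoriality of all the constructions. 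For surjectivity and injectivity, the standard approach is a smoothness/obstruction argument: both functors are deformation functors (Theorem \ref{homotopic_invariance} plus the classical fact that $\Del_{L_i}$ and $\sC_{L_i}$ are homotopy invariant, using $H^{<0}(L_i)=0$ to get that $\Del_{L_i}\simeq\sC_{L_i}$ and that the descent is insensitive to quasi-isomorphisms), they have isomorphic tangent spaces and an injection on the obstruction theory side, hence by the standard criterion (Schlessinger-type / the "standard smoothness criterion'' for morphisms of deformation functors) the natural transformation is an isomorphism. An alternative, more hands-on route is to build the inverse directly: from $(l,m)$ and the cocycle condition, choose a representative of $m$, integrate it to a connection $1$-form on $\Delta^1$ with the prescribed holonomy, and assemble a Maurer--Cartan element of $\Tot(L)$; the cocycle diagram guarantees the $\Delta^2$-compatibility up to gauge, and one checks the higher simplices contribute nothing new because the hypothesis $H^{<0}(L_i)=0$ makes the relevant obstruction groups vanish.

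\textbf{Main obstacle.} The technical heart, and the step I expect to be hardest, is the precise dictionary between the polynomial-de-Rham data on $\Delta^1$ and $\Delta^2$ in $\Tot(L)$ and the groupoid-theoretic data (morphisms in $\Del_{L_1}$, commutativity of the cocycle diagram in $\Del_{L_2}$): one must set up the "holonomy of a flat $L$-connection over a simplex'' carefully, verify it is independent of the path within a face, and show that the Maurer--Cartan equation in simplicial degree $2$ translates exactly into the pentagon/cocycle identity \emph{after} quotienting by irrelevant stabilizers — it is genuinely false before that quotient, which is why the statement is about $\Del$ and not $\sC$, and why the hypothesis $H^{<0}(L_i)=0$ (equivalently, that each $L_i$ behaves like its truncation) is needed to make $\Del_{L_i}$ the right homotopy-invariant object. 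Since this is precisely the content of \cite[Theorem 7.6]{FIM}, in the paper we simply cite it; the sketch above indicates how one would reconstruct the argument.
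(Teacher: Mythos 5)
The paper offers no proof of this statement: it is quoted verbatim from \cite[Theorem 7.6]{FIM} (with only a remark pointing to \cite[Theorem 4.1]{hinichDDG} for an earlier variant at the level of Deligne groupoids), which is exactly how you conclude your proposal. Your holonomy/descent sketch --- extracting the Maurer--Cartan object from the simplicial degree-zero component, the gauge morphism from the $\Omega_1^1\otimes L_1^0$ connection form, the cocycle diagram from flatness over $\Delta^2$ modulo irrelevant stabilizers, and using $H^{<0}(L_i)=0$ to control the higher simplices --- is a faithful outline of the cited argument and contains no visible gaps, so there is nothing further in the paper to compare it against beyond the citation.
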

A result of this type appears already in \cite[Theorem 4.1]{hinichDDG};
here under the stronger hypothesis $N^i=0$ for $i<0$ Hinich proves the
equivalence at the level of Deligne groupoids.
  
\subsection{Deformations of coherent sheaves via locally free resolutions}

Let $X$ be a separated noetherian scheme over a field $\K$ of characteristic 0 and $\sF$ a coherent sheaf on $X$. 
Every locally free resolution $\mathcal E^{\bullet}\to \mathcal F$ gives a sheaf 
$\mathcal Hom^*_{\Oh_X}(\mathcal{E}^{\bullet},\mathcal{E}^{\bullet})$ of DG-Lie algebras over the sheaf of commutative rings $\Oh_X$.
Let now $\mathcal U=\left\{U_i\right\}_{i\in I}$ be an affine open cover of $X$, then the \v{C}ech cochains  
of $\mathcal Hom^*_{\Oh_X}(\mathcal{E}^{\bullet},\mathcal{E}^{\bullet})$ in the cover $\sU$, i.e., 
\[\xymatrix{\prod_i{\End^*_{\Oh_{U_i}}(\mathcal{E}^{\bullet}|_{U_i})}\ar@<0.5ex>[r]\ar@<-0.5ex>[r] &\prod_{i,j}{\End^*_{\Oh_{U_{ij}}}(\mathcal{E}^{\bullet}|_{U_{ij}})}\ar@<1ex>[r]\ar@<0ex>[r]\ar@<-1ex>[r] &
\prod_{i,j,k}{\End ^*_{\Oh_{U_{ijk}}}(\mathcal{E}^{\bullet}|_{U_{ijk}})}\ar@<1.5ex>[r]\ar@<0.5ex>[r]\ar@<-0.5ex>[r]\ar@<-1.5ex>[r]&\dots}\]
has a natural structure of  semicosimplicial differential graded Lie algebra whose cochain complex is the one computing \v{C}ech hypercohomology. For simplicity of notation denote this semicosimplicial DG-Lie algebra by 
$L(\sU,\sE^{\bullet})$.
Notice that for every multiindex $\alpha$ in the nerve of the covering, the DG-Lie algebra 
$\End^*_{\Oh_{U_\alpha}}(\mathcal{E}^{\bullet}|_{U_\alpha})$ has trivial cohomology in negative degree.
In fact, since $U_{\alpha}$ is affine, we have 
\[ \End^*_{\Oh_{U_\alpha}}(\mathcal{E}^{\bullet}|_{U_\alpha})=\Hom^*_{\Oh_X(U_\alpha)}(
\mathcal{E}^{\bullet}(U_\alpha),\mathcal{E}^{\bullet}(U_\alpha))\]
and $\mathcal{E}^{\bullet}(U_\alpha)\to \sF(U_\alpha)$ is a projective resolution. Therefore for every $i\in \Z$ we have
\[H^{i}(\Hom^*_{\Oh_X(U_\alpha)}(\mathcal{E}^{\bullet}(U_\alpha),\mathcal{E}^{\bullet}(U_\alpha))\simeq
\Ext^i_{\Oh_X(U_\alpha)}(\sF(U_\alpha),\sF(U_\alpha))\,.\]

In particular Theorem~\ref{thm.descent} applies to the semicosimplicial DG-Lie algebra $L(\sU,\sE^{\bullet})$: as a consequence, Fiorenza, Iacono and Martinengo proved  the following result: 
 
\begin{theorem}[{\cite[Section 5]{FIM}}]\label{thm.descentesheaf}
In the above notation the quasi-isomorphism class of the DG-Lie algebra $\operatorname{Tot}(L(\sU,\sE^{\bullet}))$ depends only on $\sF$, i.e., it is independent of the choice of the resolution and of the affine cover. 
The functor $\Def_{\operatorname{Tot}(L(\sU,\sE^{\bullet}))}$ is isomorphic
to the functor $\Def_{\sF}$ of (infinitesimal) deformation of   the coherent sheaf $\sF$.
 \end{theorem}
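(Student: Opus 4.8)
The plan is to treat the two assertions separately. For the first, I would prove that the quasi-isomorphism class of $\operatorname{Tot}(L(\sU,\sE^{\bullet}))$ is unchanged under two kinds of move: (a) varying the locally free resolution $\sE^{\bullet}\to\sF$ with the affine cover $\sU$ fixed, and (b) refining $\sU$ with $\sE^{\bullet}$ fixed. Since any two pairs (resolution, cover) are linked by a finite chain of such moves, this suffices. For the second assertion I would apply the descent Theorem~\ref{thm.descent} — legitimate because, as already observed, each term $L_\alpha:=\End^*_{\Oh_{U_\alpha}}(\sE^{\bullet}|_{U_\alpha})$ of the semicosimplicial DG-Lie algebra $L(\sU,\sE^{\bullet})$ has cohomology $\Ext^*_{\Oh_X(U_\alpha)}(\sF(U_\alpha),\sF(U_\alpha))$ supported in non-negative degrees — and then identify the total descent groupoid of the semicosimplicial Deligne groupoid $\Del(L(\sU,\sE^{\bullet}))$ with the groupoid of deformations of $\sF$.

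For (a) I fix $\sU$ and two locally free resolutions $\sE^{\bullet},\sG^{\bullet}$ of $\sF$; after a mapping-cylinder replacement I may assume they are connected by a degreewise split quasi-isomorphism of bounded complexes of locally free sheaves $\phi\colon\sE^{\bullet}\hookrightarrow\sG^{\bullet}$ over $\sF$ (this reduction is where the availability of enough locally free sheaves on $X$ is used). The obstacle is that $\HOM^*_{\Oh_X}(-,-)$ is contravariant in its first slot, so $\phi$ does not induce a morphism of endomorphism DG-Lie algebras directly; to bypass this I would introduce the sheaf of DG-Lie algebras of \emph{compatible pairs}
\[ \sC^{\bullet}=\left\{(a,b)\in \HOM^*_{\Oh_X}(\sE^{\bullet},\sE^{\bullet})\times \HOM^*_{\Oh_X}(\sG^{\bullet},\sG^{\bullet})\ \middle|\ \phi\circ a=b\circ\phi\right\},\]
together with its two DG-Lie projections onto $\HOM^*_{\Oh_X}(\sE^{\bullet},\sE^{\bullet})$ and $\HOM^*_{\Oh_X}(\sG^{\bullet},\sG^{\bullet})$. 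Because $\phi$ is split mono, the map $(a,b)\mapsto \phi\circ a-b\circ\phi$ is a degreewise surjection, so $\sC^{\bullet}$ fits into a short exact sequence of sheaves of complexes; the associated long exact cohomology sequence, together with the elementary fact that precomposition and postcomposition with $\phi$ make $\HOM^*_{\Oh_X}(\sE^{\bullet},\sG^{\bullet})$ quasi-isomorphic to both endomorphism complexes (here one uses that an acyclic bounded complex of locally free sheaves is contractible), shows that both projections from $\sC^{\bullet}$ are quasi-isomorphisms of sheaves of DG-Lie algebras. Passing to \v{C}ech cochains in $\sU$ and then to totalizations — and invoking the theorem of Whitney recalled above, which identifies the cohomology of a totalization with that of its \v{C}ech complex and hence turns a simplicial-degreewise quasi-isomorphism into a quasi-isomorphism of totalizations — yields a zigzag of quasi-isomorphisms of DG-Lie algebras between $\operatorname{Tot}(L(\sU,\sE^{\bullet}))$ and $\operatorname{Tot}(L(\sU,\sG^{\bullet}))$.

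For (b) I fix $\sE^{\bullet}$ and let $\sV$ refine $\sU$; a refinement map induces a morphism of semicosimplicial DG-Lie algebras and hence a DG-Lie morphism $\operatorname{Tot}(L(\sU,\sE^{\bullet}))\to\operatorname{Tot}(L(\sV,\sE^{\bullet}))$. By Whitney's theorem the two sides have the cohomology of the respective \v{C}ech complexes, and since $X$ is separated all finite intersections of the open sets are affine, hence the coherent cohomology sheaves of $\HOM^*_{\Oh_X}(\sE^{\bullet},\sE^{\bullet})$ are acyclic on them; thus both \v{C}ech complexes compute the same hyper-$\Ext$ group $\Ext^*_{\Oh_X}(\sF,\sF)$ and the refinement map respects this identification, so the morphism of totalizations is a quasi-isomorphism. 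This establishes the first assertion. For the second, Theorem~\ref{thm.descent} gives $\Def_{\operatorname{Tot}(L(\sU,\sE^{\bullet}))}\cong\pi_0\operatorname{Tot}(\Del(L(\sU,\sE^{\bullet})))$, and it remains to compute the right-hand side. On each affine $U_{\alpha}$ one identifies $\Del_{L_\alpha}$ with the groupoid of infinitesimal deformations of the $\Oh_X(U_{\alpha})$-module $\sF(U_{\alpha})$: a Maurer--Cartan element of $L_\alpha\otimes\mathfrak{m}_A$ is precisely an $A$-linear perturbation of the differential of the projective resolution $\sE^{\bullet}(U_{\alpha})$, which by the local flatness criterion stays a resolution of a flat $A$-deformation of $\sF(U_{\alpha})$; conversely any flat deformation admits such a perturbation, unique up to homotopy, and gauge equivalences modulo the irrelevant stabilizer match isomorphisms of deformations. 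Under these objectwise equivalences the semicosimplicial Deligne groupoid becomes the \v{C}ech semicosimplicial groupoid of local deformations of $\sF$, whose total descent groupoid — built, via the cocycle diagram~\eqref{equ.cocyclediagram}, exactly from locally defined deformations glued over $\sU$ — is the groupoid of deformations of $\sF$; passing to $\pi_0$ and checking naturality in $A$ gives $\Def_{\operatorname{Tot}(L(\sU,\sE^{\bullet}))}\cong\Def_{\sF}$.

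The step I expect to be the genuine obstacle is (a): turning a quasi-isomorphism $\phi$ of resolutions — which feeds into $\HOM^*$ covariantly on one side and contravariantly on the other — into an honest zigzag of quasi-isomorphisms of \emph{sheaves of DG-Lie algebras}, via the mapping-cylinder reduction and the contractibility of the relevant acyclic bounded complexes of locally free sheaves needed to control the compatible-pairs algebra $\sC^{\bullet}$. The remaining ingredients — Theorem~\ref{thm.descent}, Whitney's theorem, the homotopy invariance of $\Def$ (Theorem~\ref{homotopic_invariance}), and the classical correspondence between deformations of a module and deformations of the differential of one of its projective resolutions — can then be assembled routinely.
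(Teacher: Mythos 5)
The paper does not actually prove this theorem: it is quoted from \cite[Section 5]{FIM}, and the text only gives a one-paragraph sketch of the idea (identify each local Deligne groupoid $\Del_{\End^*_{\Oh_{U_\alpha}}(\sE^{\bullet}|_{U_\alpha})}$ with the groupoid of deformations of $\sF_{|U_\alpha}$, then descend via Theorem~\ref{thm.descent} and the cocycle diagram~\eqref{equ.cocyclediagram}). Your proposal follows exactly that route and correctly fills in the standard supporting steps — the compatible-pairs zigzag for independence of the resolution, the \v{C}ech/hypercohomology comparison for independence of the cover, and the local MC-elements-as-perturbed-differentials identification — so it is essentially the same proof as the cited one.
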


Although technically complicated, the underlying idea of proof of Theorem~\ref{thm.descentesheaf} is quite easy and dates back to \cite{DtS,Schechtmanarbeit}, cf. also \cite{H04,H05}. 
First, by standard argument of deformation theory, essentially contained mutatis mutandis in \cite{Artinbook,Sernesi}, one prove that for every
$\alpha$ in the nerve of the covering, the Deligne groupoid of the DG-Lie algebra $\End^*_{\Oh_{U_\alpha}}(\mathcal{E}^{\bullet}|_{U_\alpha})$ is isomorphic to the deformation groupoid of infinitesimal deformations of the coherent sheaf $\sF$ over the affine scheme $U_{\alpha}$, see also \cite{carocci} for a fully detailed proof. Then 
the total groupoid of the semicosimplicial Deligne groupoid associated to $L(\sU,\sE^{\bullet})$ compute the deformations of $\sF$ over $X$, since the commutativity of \eqref{equ.cocyclediagram} can be interpreted as the 
usual cocycle conditions to glueing the local deformations of $\sF$ into a global deformation.

Assume for simplicity that $\sU$ is a finite cover and consider its disjoint union $Y=\coprod U_i$ together the open covering map $u\colon Y\to X$. Then the DG-Lie algebra $\prod_i{\End^*_{\Oh_{U_i}}(\mathcal{E}^{\bullet}|_{U_i})}$ controls the deformations of $u^*\sF$ and the natural transformation of deformation functors
$\Def_{\sF}\to \Def_{u^*\sF}$ is induced by the projection
\[ \operatorname{Tot}(L(\sU,\sE^{\bullet}))\to \prod_i{\End^*_{\Oh_{U_i}}(\mathcal{E}^{\bullet}|_{U_i})}\,.\]

\bigskip
\section{Deformations of locally unobstructed coherent sheaves}\label{sec.deformations}

A coherent sheaf $\sF$ on a scheme $X$ is said to be locally unobstructed if there exists an open affine cover 
$\sU=\{U_i\}$ such that every restriction $\sF_{|U_i}$ has unobstructed deformations.%
\footnote{The notion of obstruction to deformations  of  $\sF_{|U_i}$ makes sense even if 
the space of first order deformations is not finite dimensional, and therefore the corresponding functor of Artin rings does not satisfy Schlessinger's conditions, see \cite{FM1,ManettiSeattle}.}   

The construction of the previous section allows to give a proof in the framework of differential graded Lie algebras of the following result about the vanishing of the obstructions of a locally unobstructed coherent sheaf under the local-to-global Ext spectral sequence.

\begin{theorem}\label{thm.vanishingobstructions} 
Let $\sF$ be a locally unobstructed coherent sheaf on a projective scheme $X$ over a field of characteristic 0. Then the obstructions to deforming $\sF$ are contained in the kernel of the natural map
\[ \alpha_0\colon \Ext^2_{X}(\sF,\sF)\to E_2^{0,2}=H^0(X,\EXT^2_{X}(\sF,\sF))\,.\]   
In particular, if $\alpha_0$ is injective then $\sF$ has unobstructed deformations.
\end{theorem}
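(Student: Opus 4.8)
The plan is to compare the DG-Lie algebra controlling the global deformations of $\sF$ with the one controlling its local deformations, and to carry the unobstructedness hypothesis over along the resulting morphism by functoriality of obstructions. First I would fix a locally free resolution $\sE^{\bullet}\to\sF$ (which exists since $X$ is projective) and a finite affine open cover $\sU=\{U_i\}$, so that by Theorem~\ref{thm.descentesheaf} the DG-Lie algebra $L:=\Tot(L(\sU,\sE^{\bullet}))$ controls the deformations of $\sF$, with $H^2(L)=\Ext^2_X(\sF,\sF)$. Let $L_0=\prod_i\End^*_{\Oh_{U_i}}(\sE^{\bullet}|_{U_i})$ denote the zeroth term of the semicosimplicial DG-Lie algebra $L(\sU,\sE^{\bullet})$; as recalled in Section~3, the natural projection $\phi\colon L\to L_0$ is a morphism of DG-Lie algebras, each factor $\End^*_{\Oh_{U_i}}(\sE^{\bullet}|_{U_i})$ controls the deformations of $\sF|_{U_i}$, so $\Def_{L_0}\cong\prod_i\Def_{\sF|_{U_i}}$ and $\phi$ induces the global-to-local transformation $\Def_{\sF}\to\prod_i\Def_{\sF|_{U_i}}$. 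Since each $\Def_{\sF|_{U_i}}$ is unobstructed by hypothesis and the obstruction map of a finite product of functors is the product of the obstruction maps, the functor $\Def_{L_0}$ is unobstructed as well.

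The second step is to identify $\ker\alpha_0$ with $\ker H^2(\phi)$. By Remark~\ref{rem.spectralsequence} the complex $C(L(\sU,\sE^{\bullet}))$ carries the complete exhaustive filtration $F^p=\prod_{n\ge p}L_n[-n]$; since every $U_\alpha$ in the nerve of $\sU$ is affine, $E_1^{p,q}=H^q(L_p)=\check{C}^p(\sU,\EXT^q_X(\sF,\sF))$, so that $E_2^{p,q}=H^p(X,\EXT^q_X(\sF,\sF))$ and this is the local-to-global $\Ext$ spectral sequence; in particular its edge homomorphism $H^2(L)=\Ext^2_X(\sF,\sF)\to E_2^{0,2}=H^0(X,\EXT^2_X(\sF,\sF))$ is exactly the map $\alpha_0$. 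Again by Remark~\ref{rem.spectralsequence}, $H^2(\phi)\colon H^2(L)\to H^2(L_0)=E_1^{0,2}$ is the composite $H^2(L)\twoheadrightarrow E_\infty^{0,2}\hookrightarrow E_2^{0,2}\hookrightarrow E_1^{0,2}$, whose first two arrows compose to $\alpha_0$; since $E_\infty^{0,2}\hookrightarrow E_2^{0,2}$ is injective, $\ker\alpha_0=\ker H^2(\phi)$. (Concretely, $H^2(\phi)$ is the product of the restrictions $\Ext^2_X(\sF,\sF)\to\Ext^2_{U_i}(\sF|_{U_i},\sF|_{U_i})$, which is $\alpha_0$ followed by the sheaf-axiom injection $H^0(X,\EXT^2_X(\sF,\sF))\hookrightarrow\prod_i H^0(U_i,\EXT^2_X(\sF,\sF))$.)

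Finally I would conclude by functoriality of obstructions. Let $\omega\in\Ext^2_X(\sF,\sF)=H^2(L)$ be an obstruction, say the obstruction to lifting some $\xi\in\Def_L(A)$ along a small extension $A'\twoheadrightarrow A$ in $\Art_{\K}$. Since $\phi$ is a morphism of DG-Lie algebras, $H^2(\phi)(\omega)$ is the obstruction to lifting $\phi_*\xi\in\Def_{L_0}(A)$ along the same extension, and this vanishes because $\Def_{L_0}$ is unobstructed; hence $\omega\in\ker H^2(\phi)=\ker\alpha_0$, which is the first assertion. The second is then immediate: if $\alpha_0$ is injective the only obstruction to deforming $\sF$ is $0$, so every deformation lifts along every small extension, i.e. $\Def_{\sF}\cong\Def_L$ is unobstructed. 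The one genuinely delicate point is the middle step: verifying that the filtration spectral sequence of the totalization coincides with the classical local-to-global $\Ext$ spectral sequence and that its edge map is literally $\alpha_0$, so that $\ker\alpha_0$ is exactly the kernel of the map induced by the projection $L\to L_0$; the remaining ingredients — naturality of the obstruction map under morphisms of DG-Lie algebras and unobstructedness of a finite product of unobstructed functors — are formal.
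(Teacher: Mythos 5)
Your proposal is correct and follows essentially the same route as the paper's proof: both pass through the DG-Lie algebra morphism $\operatorname{Tot}(L(\sU,\sE^{\bullet}))\to L(\sU,\sE^{\bullet})_0=\prod_i\End^*_{\Oh_{U_i}}(\sE^{\bullet}|_{U_i})$, identify its kernel on $H^2$ with $\ker\alpha_0$ via the filtration spectral sequence of Remark~\ref{rem.spectralsequence}, and conclude by compatibility of obstruction maps with DG-Lie morphisms together with the local unobstructedness hypothesis. The only cosmetic difference is that you phrase the target as the single product $L_0$ (invoking that a product of unobstructed functors is unobstructed) where the paper projects onto each factor $\End^*_{\Oh_{U_i}}(\sE^{\bullet}|_{U_i})$ separately; these are equivalent.
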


\begin{proof} Let $\sU=\{U_i\}$ be a open affine cover such that every restriction $\sF_{|U_i}$ has unobstructed deformations. Fix a locally free resolution $\sE^{\bullet}\to \sF$  and let $L(\sU,\sE^{\bullet})$ be the semicosimplicial DG-Lie algebra as in Theorem~\ref{thm.descentesheaf}. 
According to Remark~\ref{rem.spectralsequence} the pages  $E_r^{p,q}$, $r\ge 2$, of the spectral sequence may be computed by considering the cochain complex $C(L(\sU,\sE^{\bullet}))$. Since $E_1^{p,q}=0$ for every 
$p<0$ we have 
\[ H^0(X,\EXT^n_{X}(\sF,\sF))=E_2^{0,n}\subseteq E_1^{0,n}=\prod_iH^0(U_i,\EXT^n_{U_i}(\sF,\sF))\,,\]
and then it is sufficient to prove that every obstruction is annihilated by the map 
\[ \Ext^2_{X}(\sF,\sF)=H^2(\operatorname{Tot}(L(\sU,\sE^{\bullet})))\to 
\prod_iH^0(U_i,\EXT^2_{U_i}(\sF,\sF)).\]   
induced by the morphism of DG-Lie algebras
\[ \operatorname{Tot}(L(\sU,\sE^{\bullet}))\to L(\sU,\sE^{\bullet})_0=\prod_i\End_{\Oh_{U_i}}(\sE^{\bullet}_{|U_i})\,.\]
Since the DG-Lie algebra $\End_{\Oh_{U_i}}(\sE^{\bullet}_{|U_i})$ controls the infinitesimal deformations 
of $\sF_{|U_i}$ and morphisms of DG-Lie algebras commute with obstruction maps, it follows that obstructions to deformations of $\sF$ are annihilated by every map $\operatorname{Tot}(L(\sU,\sE^{\bullet}))\to 
\End_{\Oh_{U_i}}(\sE^{\bullet}_{|U_i})$.
\end{proof}

The expert reader immediately recognize that the use of DG-Lie algebras in the proof of Theorem~\ref{thm.vanishingobstructions} is not strictly necessary and the same result can be proved by classical methods. 
In fact, for every index $i$ the composite map 
\[ \Ext^2_{X}(\sF,\sF)\to H^0(X,\EXT^2_{X}(\sF,\sF))\to H^0(U_i,\EXT^2_{U_i}(\sF,\sF))\]   
 is the obstruction map associated to the natural transformation of deformation functors
$\Def_{\sF}\to \Def_{\sF|U_i}$ and the functor $\Def_{\sF|U_i}$ is unobstructed by assumption. However, the approach via DG-Lie algebras not only it is interesting on its on, but also provide an immediate generalization of the theorem to derived deformations, i.e., to deformations of $\sF$ over differential graded local Artin rings. 
Without entering into details of derived deformation theory, we only recall that, over a field of characteristic 0, there exists an equivalence of categories, between the homotopy 
category of DG-Lie algebras and the category of derived deformation problems \cite{Lur,PridUDDT}: when the derived deformation problem is represented by a set-valued functor in the category of DG-Artin rings, the staightforward generalization of Maurer-Cartan equation and gauge equivalence gives the required equivalence, cf. \cite{EDF,PridUDDT}. Moreover unobstructed derived deformation problems corresponds precisely to homotopy abelian DG-Lie algebras 
(this is implicitly proved in \cite[Sec. 7]{EDF} since the minimal $L_{\infty}$ model is reconstructed from derived obstruction maps; for an explicit statement we refer to \cite[Thm. 6.3]{lazarev2012}).    

\subsubsection*{A remark about obstructions to derived deformations}

If $L$ is a DG-Lie algebra the notions of $\MC_L(A)$ and $\Def_L(A)$ extend naturally to the case 
where $A$ is a differential graded local $\K$-algebras with residue field $A\xrightarrow{\pi}\K$:
\[\begin{split} 
\MC_L(A)&=\left\{x\in (L\otimes\mathfrak{m}_A)^1\;\middle|\; dx+\frac{1}{2}[x,x]=0\right\}\,,\\
\Def_L(A)&=\frac{\MC_L(A)}{\exp(L\otimes\mathfrak{m}_A)^0}\,,\end{split}\]
and, as in the classical case, these two functors have the same obstruction theory.
A simple computation, see \cite{EDF} for details, shows that if 
$f\colon A\to B$ is a surjective morphism of DG-Artin rings such that its kernel is an acyclic complex annihilated by the maximal ideal $\mathfrak{m}_A$, then $f\colon \Def_L(A)\to \Def_L(B)$ is a bijective map.

If $t\in A$ is a homogeneous non-trivial closed element of degree $-n$ annihilated by the maximal ideal $\mathfrak{m}_A$, we have a small extension:
\begin{equation}\label{equ.smalldgextension} 
0\to \K[n]\xrightarrow{t}A\xrightarrow{p}A/(t)\to 0\end{equation}
and a ``connecting'' morphism $rq^{-1}\colon \Def_L(A/(t))\to H^{2+n}(L)$ is defined in the following way. Consider a formal symbol $s$ of degree $-n-1$, the differential graded Artin $\K$-algebra 
\[ A\oplus \K s,\qquad \mathfrak{m}_As=s^2=0,\quad ds=t,\] 
and the  surjective morphisms:
\[q\colon A\oplus \K s\to  A/(t),\quad q(a+bs)=p(a),\]
\[r\colon A\oplus \K s\to \K\oplus \K s,\quad 
 r(a+bs)=\pi(a)+bs\,.\]
As observed above, the map 
$q\colon \Def_L(A\oplus \K s)\to \Def_L(A/(t))$ is bijective and it is not difficult to see \cite[pag. 737]{EDF} that an element 
$x\in \Def_L(A/(t))$ lifts to $\Def_L(A)$ if and only if 
\[rq^{-1}(x)=0\in 
\Def_L(\K\oplus \K s)\cong H^{2+n}(L)\,.\]
If $L$ is an  abelian DG-Lie algebra, then 
$\Def_L(A/(t))=H^1(L\otimes\mathfrak{m}_A/{(t)})$ and the map 
$rq^{-1}\colon \Def_L(A/(t))\to H^{2}(L[n])=H^{2+n}(L)$ is the connecting homomorphism of the short exact sequence of complexes
\[ 0\to L[n]\to L\otimes\mathfrak{m}_A\to L\otimes\dfrac{\mathfrak{m}_A}{(t)}\to 0\,.\]

\begin{definition} We shall say that the morphism $rq^{-1}\colon \Def_L(A/(t))\to H^{2+n}(L)$ 
associated to the small extension \eqref{equ.smalldgextension} is an 
\emph{obstruction map} if the morphism $p$ is surjective in cohomology, or equivalently if 
$t$ is not exact in $A$.
\end{definition}

If $L$ is abelian, every obstruction map vanishes by K\"unneth formula. Since the maps $rq^{-1}$ depends only on $\Def_L$, that in turn depends  only on the homotopy class of $L$, it follows that 
if $L$ is homotopy abelian, then every (derived) obstruction map vanishes. 
Conversely, 
 using the $L_{\infty}$-minimal model of $L,$ it is possible to prove that: if all obstructions vanish, then $L$ is homotopy abelian. It is worth to point out, see \cite[Thm. 6.3]{lazarev2012} and 
\cite[Proof of Thm. 7.1]{EDF}, that the homotopy abelianity of $L$ is equivalent to the vanishing of obstructions arising from the subclass of small extensions \eqref{equ.smalldgextension} where $A$ has 
zero differential.

\begin{theorem}\label{thm.vanishingobstructionsderived} Let $\sF$ be a  coherent sheaf on a projective scheme $X$ over a field $\K$ of characteristic 0, and assume that $\sF$ is locally unobstructed in the derived sense. 
Then the obstructions to derived deformations of $\sF$ are contained in the kernel of the natural map of graded vector spaces 
\[ \alpha\colon \bigoplus_n\Ext^{n+2}_{X}(\sF,\sF)\to \bigoplus_n H^0(X,\EXT^{n+2}_{X}(\sF,\sF))\,.\]   
In particular, if $\alpha$ is injective then the DG-Lie algebra $\operatorname{Tot}(L(\sU,\sE^{\bullet}))$ controlling deformations of $\sF$ is homotopy abelian over $\K$.
\end{theorem}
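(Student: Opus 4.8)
The plan is to repeat, in the derived setting, the argument used for Theorem~\ref{thm.vanishingobstructions}, replacing the classical obstruction theory in $\Ext^2$ by the derived obstruction maps $rq^{-1}$ recalled before the statement and invoking, in both directions, the equivalence between unobstructedness and homotopy abelianity. First I would choose a finite affine open cover $\sU=\{U_i\}$ such that each restriction $\sF_{|U_i}$ has unobstructed derived deformations, together with a locally free resolution $\sE^{\bullet}\to\sF$ (this is where projectivity of $X$ enters), and set $L=L(\sU,\sE^{\bullet})$. By Theorem~\ref{thm.descentesheaf} the DG-Lie algebra $\operatorname{Tot}(L)$ controls the derived deformations of $\sF$, and since the cochain complex associated to $L$ computes the \v{C}ech hypercohomology of $\HOM^*_{\Oh_X}(\sE^{\bullet},\sE^{\bullet})$ one has $H^{n+2}(\operatorname{Tot}(L))=\Ext^{n+2}_X(\sF,\sF)$ for every $n$. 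For each index $i$ the DG-Lie algebra $\End^*_{\Oh_{U_i}}(\sE^{\bullet}_{|U_i})=\Hom^*_{\Oh_X(U_i)}(\sE^{\bullet}(U_i),\sE^{\bullet}(U_i))$ controls the derived deformations of $\sF_{|U_i}$, and the hypothesis that these are unobstructed means precisely that it is homotopy abelian over $\K$; in particular all of its derived obstruction maps vanish.

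Next I would invoke the morphism of DG-Lie algebras $\pi\colon\operatorname{Tot}(L)\to L_0=\prod_i\End^*_{\Oh_{U_i}}(\sE^{\bullet}_{|U_i})$ of Remark~\ref{rem.spectralsequence}. All the data defining the derived obstruction map attached to a small extension \eqref{equ.smalldgextension} — the auxiliary DG-algebra $A\oplus\K s$ and the maps $q,r$ — are functorial in the DG-Lie algebra, so $rq^{-1}$ is a natural transformation; composing with $\pi$ and then projecting onto the $i$-th factor, every derived obstruction of $\sF$ is carried to a derived obstruction of $\sF_{|U_i}$, which is zero. Hence every derived obstruction of $\sF$, lying a priori in $\bigoplus_n H^{n+2}(\operatorname{Tot}(L))=\bigoplus_n\Ext^{n+2}_X(\sF,\sF)$, is killed by the map $H^{n+2}(\operatorname{Tot}(L))\to H^{n+2}(L_0)=\prod_i\Ext^{n+2}_{U_i}(\sF,\sF)$ induced by $\pi$.

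It remains to identify the kernel of this map with $\ker\alpha$. On each affine $U_i$ one has $\Ext^{n+2}_{U_i}(\sF,\sF)=H^0(U_i,\EXT^{n+2}_{U_i}(\sF,\sF))$, so by Remark~\ref{rem.spectralsequence} the map induced by $\pi$ factors as the edge homomorphism $H^{n+2}(\operatorname{Tot}(L))\twoheadrightarrow E_\infty^{0,n+2}\hookrightarrow E_2^{0,n+2}=H^0(X,\EXT^{n+2}_X(\sF,\sF))$, which is exactly $\alpha$, followed by the inclusion $E_2^{0,n+2}\hookrightarrow E_1^{0,n+2}=\prod_iH^0(U_i,\EXT^{n+2}_{U_i}(\sF,\sF))$. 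Since $E_2^{0,q}$ is a subobject of $E_1^{0,q}$ (there being no negative columns), this inclusion has trivial kernel, and we conclude that every derived obstruction of $\sF$ lies in $\ker\alpha$, which is the first assertion. For the last one, if $\alpha$ is injective then $\ker\alpha=0$, so every derived obstruction map of $\operatorname{Tot}(L)$ vanishes, in particular those attached to small extensions \eqref{equ.smalldgextension} in which $A$ has zero differential; by the criterion recalled before the statement (which uses the $L_\infty$-minimal model and characteristic $0$) this forces $\operatorname{Tot}(L)$ to be homotopy abelian over $\K$.

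I expect the only genuinely delicate point to be the identification, via Remark~\ref{rem.spectralsequence}, of the map induced on cohomology by the DG-Lie projection $\operatorname{Tot}(L)\to L_0$ with the edge homomorphism $\alpha$ of the local-to-global $\operatorname{Ext}$ spectral sequence; granted this, everything else is a formal consequence of the naturality of the derived obstruction maps and of the equivalence between unobstructedness and homotopy abelianity. Unlike Theorem~\ref{thm.vanishingobstructions}, the present statement genuinely requires derived deformation theory, since for $n>0$ the spaces $\Ext^{n+2}_X(\sF,\sF)$ carry obstructions only there.
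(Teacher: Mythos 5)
Your argument is correct and takes essentially the same route as the paper: the same morphism $\operatorname{Tot}(L(\sU,\sE^{\bullet}))\to L_0=\prod_i\End^*_{\Oh_{U_i}}(\sE^{\bullet}_{|U_i})$, the same identification of the induced map in cohomology with $\alpha$ via Remark~\ref{rem.spectralsequence}, and the same use of local homotopy abelianity together with the naturality of derived obstruction maps. The only (immaterial) difference is in the final step: the paper deduces homotopy abelianity of $\operatorname{Tot}(L(\sU,\sE^{\bullet}))$ from injectivity in cohomology into the homotopy abelian $L_0$ (numerical homotopy abelianity plus Remark~\ref{rem.nha}), whereas you invoke the equivalence between the vanishing of all derived obstructions and homotopy abelianity; both criteria appear in the paper and either closes the argument.
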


\begin{proof} It the same sitution as in the proof of Theorem~\ref{thm.vanishingobstructions} 
the assumption implies that every DG-Lie algebra  $\End_{\Oh_{U_i}}(\sE^{\bullet}_{|U_i})$ is homotopy abelian over $\K$ and the map of graded vector spaces $\alpha$ is the (derived) obstruction map 
induced by the morphism of DG-Lie algebras
\[ \operatorname{Tot}(L(\sU,\sE^{\bullet}))\to L(\sU,\sE^{\bullet})_0=\prod_i\End_{\Oh_{U_i}}(\sE^{\bullet}_{|U_i})\,.\]
If $\alpha$ is injective, then $\operatorname{Tot}(L(\sU,\sE^{\bullet}))$ is numerically homotopy abelian, and hence homotopy abelian over the field $\K$ by Remark~\ref{rem.nha}.
\end{proof}

\subsection{The case of locally complete intersection ideal sheaves.}
The results of Section~\ref{sec.endokoszul} imply in particular that the 
assumption about local unobstructness of $\sF$ made in Theorems~\ref{thm.vanishingobstructions} and \ref{thm.vanishingobstructionsderived} is valid when $\sF$ is a 
locally complete intersection  ideal sheaf $\sI\subset \Oh_X$: this means that $\sI$ is a coherent sheaf and 
there exists an affine open cover $\sU=\{U_i\}$ such that on every $U_i$ the quotient sheaf $\Oh_Z=\Oh_X/\sI$ admits a Koszul resolution $\sK^*\to \Oh_Z$, where
\[ \sK^*\colon \quad 0\to \wedge^r\Oh^r_{U_i}\to \cdots \to \wedge^2\Oh^r_{U_i}\to \Oh^r_{U_i}\xrightarrow{(f_1,\dots,f_r)}\Oh_{U_i}\,,\]
\[ f_1,\ldots,f_r\in \sI(U_i),\qquad H^0(\sK^*)=\frac{\Oh_{U_i}}{\;\sI_{|U_i}}\,,\qquad H^i(\sK^*)=0\;\text{ for every }i\not=0\,.\]
We do not require that the integer $r$ is the same for every $U_i$: for instance if $U_i$ does not intersect the closed subscheme defined by $\sI$ we have $r=1$ and $f_1$ an invertible element.

\begin{lemma} Let $\sI\subset \Oh_X$ be a locally complete intersection ideal sheaf on a projective scheme over a field $\K$ of characteristic 0, and denote by $\Oh_Z=\Oh_X/\sI$ the structure sheaf on the closed subscheme $Z$ defined by $\sI$.

Let $\sF$ be either a line bundle over $Z$ or $\sF=\sI\otimes \sL$ for a line bundle $\sL$ on $X$. Then for every open affine subset $U\subset X$ the sheaf $\sF_{|U}$ has unobstructed derived deformations.
\end{lemma}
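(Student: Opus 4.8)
Let $U\subset X$ be an open affine subset and set $A=\Oh_X(U)$. The deformations of $\sF_{|U}$ are controlled, by the discussion preceding the lemma, by the DG-Lie algebra $\End^*_{\Oh_U}(\sE^{\bullet}_{|U})$ for a locally free resolution $\sE^{\bullet}$ of $\sF$; since $U$ is affine this is quasi-isomorphic to $\Hom^*_A(P^{\bullet},P^{\bullet})$ for any projective resolution $P^{\bullet}$ of the $A$-module $\sF(U)$. So it suffices to produce, for each of the two cases, a projective resolution whose endomorphism DG-Lie algebra is homotopy abelian over $\K$ (equivalently, by Remark~\ref{rem.nha}, numerically homotopy abelian, since $\K$ has characteristic $0$).

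\textbf{Case $\sF=\Oh_Z$-line bundle, i.e. $\sF$ locally $\Oh_Z$.} After possibly shrinking and using that $\sI$ is locally a complete intersection, the module $\sF(U)$ is, up to the twist by a free rank-one module (which does not change the endomorphism algebra), isomorphic to $A/(f_1,\dots,f_r)$ for a regular sequence $f_1,\dots,f_r\in A$; its Koszul complex $K^*=K(f_1,\dots,f_r)$ is a projective resolution. By Theorem~\ref{thm.nhakoszul}, $\Hom^*_A(K^*,K^*)$ is numerically homotopy abelian, hence homotopy abelian over $\K$, so $\Def_{\sF_{|U}}$ is unobstructed.

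\textbf{Case $\sF=\sI\otimes\sL$.} Twisting by the line bundle $\sL$ is, over the affine $U$, an isomorphism of $A$-modules and again irrelevant for the endomorphism DG-Lie algebra, so we may assume $\sF(U)=\sI(U)=(f_1,\dots,f_r)\subset A$ with $f_1,\dots,f_r$ a regular sequence. The key observation is that a free resolution of the ideal $I=(f_1,\dots,f_r)$ is obtained from the Koszul complex by truncation: since the Koszul complex $K^*$ is exact, $\bi_f\colon K^{-1}\to I$ induces a quasi-isomorphism $K^{<0}\xrightarrow{\ } I[1]$, exactly as in the proof of Theorem~\ref{thm.nhakoszultronco}. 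Hence $\End^*_{\Oh_U}(\sE^\bullet_{|U})$ is quasi-isomorphic to $\Hom^*_A(K^{<0},K^{<0})$, which by Theorem~\ref{thm.nhakoszultronco} is numerically homotopy abelian, hence homotopy abelian over $\K$; therefore $\sF_{|U}$ has unobstructed derived deformations.

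\textbf{The main point to be careful about} is the bookkeeping of the two auxiliary identifications: first, that deformations of $\sF_{|U}$ are genuinely controlled by the endomorphism DG-Lie algebra of a \emph{projective} resolution over $A$ (this uses affineness of $U$ and the material recalled in Section on deformations via locally free resolutions), and second, that tensoring with a rank-one free $A$-module — which is what $\sL_{|U}$ and the local triviality of the $\Oh_Z$-line bundle amount to — induces an isomorphism of DG-Lie algebras on the relevant $\Hom$ complexes and so preserves homotopy abelianity. The case where $U$ misses the subscheme $Z$ (so $r=1$ and $f_1$ invertible, $\sI_{|U}=\Oh_U$, $\Oh_{Z,|U}=0$) is trivial and can be disposed of in one line. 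Everything else is a direct citation of Theorems~\ref{thm.nhakoszul} and \ref{thm.nhakoszultronco} together with Remark~\ref{rem.nha}.
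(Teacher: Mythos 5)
There is a genuine gap, and it sits exactly where you write ``after possibly shrinking'': the lemma asserts unobstructedness of $\sF_{|U}$ for an \emph{arbitrary} affine open $U\subset X$, and on such a $U$ there is in general no Koszul resolution of $\sF(U)$ at all. A locally complete intersection ideal sheaf need not be generated by a regular sequence on every affine open: for instance, if $U$ is a smooth affine surface with nontrivial Picard group sitting inside a projective surface, a divisor whose class is nontrivial gives an ideal $\sI(U)$ that is locally principal but not generated by one element, and for codimension reasons not by any regular sequence. Likewise, neither the line bundle on $Z$ nor $\sL$ need be trivial on $U$, so $\sF(U)$ need not be ``a free rank-one twist'' of $A/(f_1,\dots,f_r)$ or of $(f_1,\dots,f_r)$. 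Shrinking $U$ to a $V$ where all of this becomes true proves a statement about $\sF_{|V}$, not about $\sF_{|U}$, and passing back from a cover of $U$ to $U$ itself is precisely the nontrivial content of the lemma.

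The paper closes this gap by a bootstrapping step that your proposal omits: since $U$ is affine, higher cohomology of quasi-coherent sheaves on $U$ vanishes, so $\Ext^n_U(\sF,\sF)=H^0(U,\EXT^n_U(\sF,\sF))$ and the local-to-global obstruction map $\alpha$ of Theorem~\ref{thm.vanishingobstructionsderived} (applied with $U$ in place of $X$) is injective. Hence it suffices to know that $\sF$ is \emph{locally} unobstructed in the derived sense, i.e.\ that every point of $U$ has a small affine neighbourhood $V$ on which $\sI$ is generated by a regular sequence and $\sF_{|V}\simeq\Oh_{Z\cap V}$ or $\sF_{|V}\simeq\sI_{|V}$; only at that stage do Theorems~\ref{thm.nhakoszul} and~\ref{thm.nhakoszultronco} enter, exactly as in the second half of your argument. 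So the Koszul computation in your two cases is correct but is applied one level too early; you need the descent-type statement (Theorem~\ref{thm.vanishingobstructionsderived} together with the vanishing of higher cohomology on affines) to transport homotopy abelianity from the small opens $V$ up to the given $U$.
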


\begin{proof} Since $\Ext^n_U(\sF,\sF)=H^0(U,\EXT^n_U(\sF,\sF))$, by Theorem~\ref{thm.vanishingobstructionsderived} it is sufficient to prove that $\sF_{|U}$ is locally unobstructed in the derived sense, i.e., that 
every point belongs to an open affine set $V\subset U$ such that  the DG-Lie algebra controlling deformations of 
$\sF_{|V}$ is homotopy abelian. Choose $V$ sufficiently small such that $\sI$ is generated by a regular sequence
and either $\sF_{|U}\simeq \Oh_{Z\cap V}$ or $\sF_{|U}\simeq \sI_{V}$.

Since the homotopy class of the DG-Lie algebra controlling deformations is independent of the choice of the resolution we can consider the Koszul resolutions
\[ \sK^*\xrightarrow{\text{q-iso}}\Oh_{Z\cap V},\qquad \sK^{<0}\xrightarrow{\text{q-iso}}\sI_{|V},\]
where
\[ \sK^*\colon \quad 0\to \wedge^r\Oh^r_{V}\to \cdots \to \wedge^2\Oh^r_{V}\to \Oh^r_{V}\xrightarrow{(f_1,\dots,f_r)}\Oh_{V}\,.\]
According to Theorems~\ref{thm.nhakoszul} and~\ref{thm.nhakoszultronco} both the DG-Lie algebras $\End_{\Oh_V}^*(\sK^*)$ and 
$\End_{\Oh_V}^*(\sK^{<0})$ are numerically homotopy abelian, and hence homotopy abelian over the field $\K$.
\end{proof}

\begin{corollary}\label{cor.vanishingobstructionsderived} 
Let $\sI\subset \Oh_X$ be a locally complete intersection ideal sheaf on a projective scheme over a field $\K$ of characteristic 0, and denote by $\Oh_Z=\Oh_X/\sI$ the structure sheaf on the closed subscheme $Z$ defined by $\sI$.
Let $\sF$ be either a line bundle over $Z$ or $\sF=\sI\otimes \sL$ for a line bundle $\sL$ on $X$.
Then:\begin{enumerate}

\item  the obstructions to deforming $\sF$ are contained in the kernel of the natural map
\[ \alpha_0\colon \Ext^2_{X}(\sF,\sF)\to H^0(X,\EXT^2_{X}(\sF,\sF))\,.\]   
In particular, if $\alpha_0$ is injective then $\sF$ has unobstructed deformations;

\item the obstructions to derived deformations of $\sF$ are contained in the kernel of the natural map of graded vector spaces 
\[ \alpha\colon \bigoplus_n\Ext^{n+2}_{X}(\sF,\sF)\to \bigoplus_n H^0(X,\EXT^{n+2}_{X}(\sF,\sF))\,.\]   
In particular, if $\alpha$ is injective then the DG-Lie algebra $\operatorname{Tot}(L(\sU,\sE^{\bullet}))$ controlling deformations of $\sF$ is homotopy abelian over $\K$.
\end{enumerate}
\end{corollary}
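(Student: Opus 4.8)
The plan is to obtain the statement as an essentially formal consequence of the Lemma immediately above, combined with Theorems~\ref{thm.vanishingobstructions} and~\ref{thm.vanishingobstructionsderived}; all the substantial work has already been carried out in Section~\ref{sec.endokoszul} and in that Lemma, so what remains is only to match hypotheses.

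First I would fix an arbitrary affine open cover $\sU=\{U_i\}$ of $X$. By the preceding Lemma each restriction $\sF_{|U_i}$ has unobstructed derived deformations, so by definition $\sF$ is locally unobstructed in the derived sense; and since unobstructed derived deformations are in particular unobstructed in the classical sense, $\sF$ is also locally unobstructed in the ordinary sense. Concretely, the proof of that Lemma produces, on $V\subseteq U_i$ sufficiently small, Koszul resolutions $\sK^*\to \Oh_{Z\cap V}$ and $\sK^{<0}\to \sI_{|V}$ whose endomorphism DG-Lie algebras are numerically homotopy abelian by Theorems~\ref{thm.nhakoszul} and~\ref{thm.nhakoszultronco}, hence homotopy abelian over $\K$ by Remark~\ref{rem.nha}; this is exactly what is needed to invoke both vanishing theorems below.

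With both forms of local unobstructedness in hand, part (1) is precisely the conclusion of Theorem~\ref{thm.vanishingobstructions} applied to $\sF$, and part (2) is precisely the conclusion of Theorem~\ref{thm.vanishingobstructionsderived}, the remaining hypothesis ($X$ projective over a field of characteristic $0$) being part of the assumptions. I do not anticipate any genuine obstacle here: the corollary is a matter of assembling results already proved, and the only point needing a word of care is that the integer $r$ appearing in the local Koszul resolutions may vary with the chosen open set — equalling $1$, with $f_1$ invertible and $\sK^*$ acyclic, on opens disjoint from $Z$ — but this case is already covered in the setup preceding the Lemma and causes no difficulty, the endomorphism algebra of an acyclic complex being trivially numerically homotopy abelian.
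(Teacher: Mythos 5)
Your proposal is correct and follows exactly the paper's route: the authors' own proof is the one-line ``Immediate from the above results,'' i.e.\ the Lemma supplies local unobstructedness (derived, hence also classical) for $\sF$ on every affine open, and Theorems~\ref{thm.vanishingobstructions} and~\ref{thm.vanishingobstructionsderived} then give parts (1) and (2) respectively. Your extra remarks on the varying integer $r$ and the acyclic Koszul complex off $Z$ are accurate but already absorbed into the setup and the Lemma, so nothing further is needed.
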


\begin{proof} Immediate from the above results.\end{proof}

\begin{example} Let $Z$ be a locally complete intersection subvariety of a smooth projective manifold $X$, with normal sheaf $\sN_{Z|X}$. Then 
$\EXT^n_{X}(\Oh_Z,\Oh_Z)\simeq \wedge^n\sN_{Z|X}$ and then the obstruction to deformations of the coherent sheaf $\Oh_Z$ are contained in the kernel of the map
\[ \Ext^2_X(\Oh_Z,\Oh_Z)\to H^0(Z,\wedge^2\sN_{Z|X})\,.\]

If $H^i(Z,\wedge^k\sN_{Z|X})=0$ for every $k\ge 0$ and every $i>0$, then the coherent sheaf $\Oh_Z$ is unobstructed in the derived sense: in fact by the local-to-global Ext spectral sequence we get that the map $\alpha$ of
Corollary~\ref{cor.vanishingobstructionsderived} is injective.  
\end{example}

\begin{example} Let $Z$ be a locally complete intersection subvariety of codimension $p\geq 2$ of a smooth projective manifold $X$, with ideal sheaf $\sI$. Then $\EXT^k_{X}(\sI,\sI)=0$ for $k\geq p$ and  
\begin{equation}\label{eq:iso} 
\EXT^k_{X}(\sI,\sI)\cong \EXT^k_{X}(\Oh_Z,\Oh_Z)\cong \wedge^k\sN_{Z|X}\quad\text{for}\quad 0\le  k < p\,.
\end{equation}
The first part is clear since, locally, the coherent sheaf $\sI$ has projective dimension $<p$ as 
$\Oh_X$-module. In view of the natural isomorphism 
$\HOM_X(\Oh_Z,\Oh_Z)\xrightarrow{\simeq}\HOM_X(\Oh_X,\Oh_Z)$, the functor $\HOM_X(-,\Oh_Z)$ applied to the short exact sequence $0\to\sI\xrightarrow{j}\Oh_X\xrightarrow{\pi}\Oh_Z\to 0$, give a sequence of natural isomorphisms 
\[  \EXT^k_{X}(\sI,\Oh_Z)\xrightarrow{\cong}\EXT^{k+1}_{X}(\Oh_Z,\Oh_Z),\qquad k\ge 0\,.\]
Then we observe that for every $i\not=0,p-1$ we have 
$\EXT^i_{X}(\sI,\Oh_X)\cong \EXT^{i+1}_{X}(\Oh_Z,\Oh_X)=0$,
while  
the natural map 
\[\EXT^{p-1}_X(\sI,\Oh_X)\to \EXT^{p-1}(\sI,\Oh_Z) \]
is an isomorphism: this follows easily by a direct computation involving 
the Koszul complex as locally free resolution of $\sI$.
Moreover, since $\EXT^{0}_{X}(\Oh_Z,\Oh_X)=\EXT^{1}_{X}(\Oh_Z,\Oh_X)=0$ we have 
$\HOM_X(\sI,\Oh_X)\simeq \HOM_X(\Oh_X,\Oh_X)\simeq \Oh_X$ and in particular the morphism 
\[\HOM_X(\sI,\Oh_X)\xrightarrow{0} \HOM_X(\sI,\Oh_Z) \]
is trivial.

Therefore, the functor  $\HOM_X(\sI,-)$ applied to the short exact sequence $0\to\sI\xrightarrow{j}\Oh_X\xrightarrow{\pi}\Oh_Z\to 0$ gives a sequence of isomorphisms:
\[ \HOM_X(\sI,\sI)\cong \HOM_X(\sI,\Oh_X)\cong \Oh_X,\qquad 
\EXT^{k-1}_{X}(\sI,\Oh_Z)\xrightarrow{\cong}\EXT^{k}_{X}(\sI,\sI)\qquad 0<k<p\,.\]

In conclusion, we can rephrase the result of Corollary~\ref{cor.vanishingobstructionsderived} as follows:
if $p>2$ then the obstruction to deformations of the coherent sheaf $\sI$ are contained in the kernel of the map
\[ \Ext^2_X(\sI,\sI)\to H^0(Z,\wedge^2\sN_{Z|X})\,.\]

If $H^i(Z,\wedge^k\sN_{Z|X})=0$ for every $0\leq k <p$ and every $i>0$, then the local to global morphism $\alpha$ is injective and the coherent sheaf $\sI$ is unobstructed in the derived sense.

\end{example}

\bigskip
\begin{acknowledgement}
 M.M. wishes to acknowledge the    support by Italian MIUR under PRIN project 2015ZWST2C \lq\lq Moduli spaces and Lie theory\rq\rq. F.C. thanks the second named author who proposed  this project for her master thesis in the far 2014 \cite{carocci}.
F.C. was supported by the Engineering and Physical Sciences Research Council [EP/L015234/1]. 
The EPSRC Centre for Doctoral Training in Geometry and Number Theory (The London School of Geometry and Number Theory), University College London
 \end{acknowledgement}

\end{document}